%version de 21.09.2011Let $n=\dim V$.

\NeedsTeXFormat{LaTeX2e}
\documentclass[reqno,10pt]{amsart}
\usepackage{amsthm}
\usepackage{amsmath}
\usepackage{amssymb}
\usepackage{amsfonts}
\usepackage{mathrsfs}
\usepackage{todonotes}
\usepackage{verbatim}
\usepackage[bookmarksnumbered,colorlinks,urlcolor=blue]{hyperref}
\usepackage{graphicx}
\usepackage{graphics}
\usepackage{float}
\usepackage{enumerate}
\usepackage{ulem}
\def\bb#1\eb{\textcolor{blue}
{#1}} %
\def\br#1\er{\textcolor{red}
{#1}} %
\hyphenation{Lo-ren-tzian}
%\flushbottom
%\topmargin 0pt
%\headheight 0pt
%\headsep 0pt
%%%%%%%%%%%%%%%%%%%%%%%%%%%%%%%%%%%%%
%para cambiar los margenes:
%\oddsidemargin 0pt \evensidemargin 0pt \textheight 9in \textwidth 6.5in
%%%%%%%%%%%%%%%%%%%%%%%%%%%%%%%%%%%%%%%%%%%%%
%\textwidth 16cm \textheight 20cm \topmargin 0cm \evensidemargin
%0.5cm \oddsidemargin 0.5cm
%\usepackage[twoside,dvips]{geometry}
%\geometry{a4paper, vmargin={2.2cm , 3.5cm}, hmargin={3.2cm ,3.2cm}}
%\userpackage[all]{xy}

%%%%%%%%%%%%%%%%%%%%%%% FUNCTIONS %%%%%%%%%%%%%%%%%%%%%%%%%%%%%%

 % Zero grande para matrizes diagonias

\DeclareMathOperator{\comp}{length}

%%%%%%%%%%%%%%%%%%%%%%% DEFINING COLORS %%%%%%%%%%%%%%%%%%%%%%%%%%%%%%

\definecolor{creme}{RGB}{255,253,150}
\definecolor{verdeescuro}{RGB}{47,79,47}
%%%%%%%%%%%%%%%%%%%%%%%%%%%%%%%%%%%%%%%%%%%%%%%%%%%%%%%%%%%%%%%%%%%%%%%%5

\newcommand{\R}{\mathds R}

\newcommand{\qcd}{\begin{flushright} $\Box$ \end{flushright}}

   \def\br#1\er{\textcolor{red}{#1}} %
      \def\bb#1\eb{\textcolor{blue}{#1}} %

\title[]{Locally extremal timelike geodesic loops on Lorentzian manifolds}

\author[I.P. Costa e Silva]{Ivan P. Costa e Silva}
\address{Department of Mathematics, Universidade Federal de Santa Catarina,
%\hfill\break\indent
%\hfill\break\indent
\hfill\break\indent 88.040-900
Florianópolis-SC, Brazil.}
\email{pontual.ivan@ufsc.br}

\author[J.L. Flores]{Jos\'e L. Flores}
\address{Departamento de \'Algebra, Geometr\'{\i}a y Topolog\'{\i}a,  Universidad de M\'alaga
%\hfill\break\indent
\hfill\break\indent
Facultad de Ciencias, Campus Universitario de Teatinos,
\hfill\break\indent 29071 M\'alaga, Spain}
\email{floresj@uma.es}

\author[K.P.R. Honorato]{Kledilson P. R. Honorato}
\address{Department of Mathematics, Universidade Federal de Santa Catarina,
	%\hfill\break\indent
	%\hfill\break\indent
	\hfill\break\indent 88.040-900
	Florianópolis-SC, Brazil.}
\email{kledilson.honorato@posgrad.ufsc.br}

%\thanks{2010 {\em Mathematics Subject Classification:} Primary  53C22 \\
%\textbf{Key words:} Geodesic loop, closed geodesic, conjugate point, injectivity radius.}

\begin{document}
\newtheorem{thm}{Theorem}[section]
\newtheorem{prop}[thm]{Proposition}
\newtheorem{lemma}[thm]{Lemma}
\newtheorem{cor}[thm]{Corollary}
\theoremstyle{definition}
\newtheorem{defi}[thm]{Definition}
\newtheorem{notation}[thm]{Notation}
\newtheorem{exe}[thm]{Example}
\newtheorem{conj}[thm]{Conjecture}
\newtheorem{prob}[thm]{Problem}
\newtheorem{rem}[thm]{Remark}
\newtheorem{conv}[thm]{Convention}
\newtheorem{crit}[thm]{Criterion}
\newtheorem{claim}[thm]{Claim}

\newcommand{\ben}{\begin{enumerate}}
\newcommand{\een}{\end{enumerate}}

\newcommand{\bit}{\begin{itemize}}
\newcommand{\eit}{\end{itemize}}

\begin{abstract}
Conditions for the existence of closed geodesics is a classic, much-studied subject in Riemannian geometry, with many beautiful results and powerful techniques. However, many of the techniques that work so well in that context are far less effective in Lorentzian geometry. In revisiting this problem here, we introduce the notion of timelike geodesic homotopy, a restriction to geodesics of the more standard timelike homotopy (also known as $t$-homotopy) of timelike loops on Lorentzian manifolds. This tool is combined with a local shortening/stretching of length argument to provide a number of new results on the existence of closed timelike geodesics on compact Lorentz manifolds. 
%A proof is given that any locally extremal, non-self-conjugate timelike geodesic loop on a Lorentzian manifold is a closed geodesic. 
%%complete space-time is a closed geodesic. 
%
% we introduce and discuss the notion of {\it timelike geodesic homotopy} (Definiton \ref{tghomotopy}), which is a restriction to geodesics of the more standard timelike homotopy of closed timelike curves; the new notion defines an equivalence relation on the set of timelike geodesic loops. We then proceed to show how timelike geodesic homotopies can be used (with the key help of Theorem \ref{stretch}) to obtain a number of new results on the existence of closed timelike geodesics in compact Lorentz manifolds
\end{abstract}

\maketitle
\section{Introduction} 

A well-known result in Riemannian geometry establishes the existence of at least one periodic geodesic in any compact Riemannian manifold. This result has no general analog in Lorentzian geometry;   %where even the question of whether there exists at least one closed geodesic in any compact Lorentz manifold is open. 
nevertheless, there are multiple partial results in this direction. We focus here only on the (non-)existence of closed {\it causal}, and especially {\it timelike} geodesics. In \cite{tipler} Tipler proved the existence of one closed timelike geodesic in any compact Lorentzian manifold admitting a regular covering by a globally hyperbolic spacetime with compact Cauchy hypersurfaces. Guediri \cite{guediri02,guediri03,guediri07} and S\'anchez \cite{sanchez} obtained the same conclusion in the case of a compact Lorentzian manifold which is again regularly covered by a globally hyperbolic spacetime, but whose Cauchy hypersurfaces are not necessarily compact; instead, the group of deck transformations of the cover is assumed to possess certain extra properties. In \cite{galloway}, Galloway proved the existence of a longest periodic timelike curve - which is necessarily a closed timelike geodesic - in each {\it stable} free timelike homotopy class. In \cite{galloway86}, the same author proved the existence of a causal closed geodesic in any compact two-dimensional Lorentzian manifold. More recently, Guediri has proved that certain compact flat spacetimes contain a causal closed geodesic \cite{guediri02}, and that such spacetimes contain a closed timelike geodesic if and only if the fundamental group of the underlying manifold contains a nontrivial timelike translation \cite{guediri03}. Nonexistence results for periodic causal geodesics are also available, see \cite{galloway86,guediri03B,guediri07B}.

%%%A classical subject in Geometry is the question of existence and multiplicity of periodic geodesics. A well known result in differential geometry establishes the existence of a periodic geodesic in every compact Riemannian manifold; a Lorentzian analog of this result is still an open problem in its full generality. However, there are remarkable partial results in this direction. An earlier result by Tipler (see \cite{tipler}) gives the existence of one periodic timelike geodesic in compact Lorentzian manifolds that admit a regular covering which has a compact Cauchy surface. Recently, this result has been extended by Guediri [15,16,19] and Sánchez [33] to the case that the Cauchy surface in the covering is not necessarily compact,
%%%but assuming certain hypotheses on the group of deck transformations. The existence
%%%of a periodic timelike geodesic has been established also by Galloway in [12], where he proves the existence of a longest periodic timelike curve, which is necessarily a geodesic, in each stable free timelike homotopy class. In [13], the same author proves the existence of a causal (i.e., nonspacelike) periodic geodesic in any compact two-dimensional Lorentzian manifold. More recently, Guediri has proved that compact flat spacetimes contain a causal periodic geodesic [15], and that such spacetimes contain a periodic timelike geodesic if and only if the fundamental group of the underlying manifold contains a nontrivial timelike translation [16]. Non existence results for periodic causal geodesics are also available, see \cite{galloway13,}.

The general problem of existence of periodic geodesics also has interesting partial solutions in the case of compact Lorentz manifolds with some added symmetry. For instance, by using variational methods in static spacetimes it has been established the existence of a closed geodesic in each free homotopy class corresponding to an element of the fundamental group having finite conjugacy class \cite{CMP}. This result has been generalized to any free homotopy class containing a periodic timelike curve in \cite{sanchez}, leading to the conclusion that any compact static spacetime admits a closed timelike geodesic. In \cite{FJP} the authors obtain an existence result for periodic geodesics in Lorentzian manifolds endowed with a Killing vector field which is timelike at least at one point, by using the strong relation between Killing vector fields and geodesics, together with a compactness criterion for subgroups of the isometry group.

%... As to the (more general) stationary case, i.e., the manifold admits an everywhere timelike Killing vector field, there exist in literature some previous results when the spacetime admits a standard stationary expression and, as a consequence, it is never compact (see [5,25]). As suggested by the authors in [8,19,33], an interesting open question would be to determine if existence results for periodic geodesics also hold for any stationary compact Lorentzian manifold.Moreover, in general there does not exist a globally hyperbolic covering for this class of manifolds (see [33, p. 23], observe also that, unlike the static case, compact stationary Lorentzian manifolds may be simply connected) and a different approach from that of [8,15,16,19,33,36] is needed. In \cite{FJP} the authors obtain existence result for periodic geodesics in stationary Lorentzian manifolds by using the strong relation between Killing vector fields and geodesics, and a compactness criterion for subgroups of the isometry group of a Lorentzian manifold.

The difficulty in establishing general existence results for (say) closed timelike geodesics stems from a number of technical peculiarities of Lorentzian geometry as compared to the Riemannian case, which make it problematic to apply variational techniques which are both natural and fruitful in the latter context, but not in the former. These differences manifest, for example, in the relatively unwieldy structure of the space of closed causal curves (endowed with suitable topologies) on a compact Lorentzian manifold. The length functional defined thereon has in general neither a natural lower bound, nor an upper bound. The tremendously useful Palais-Smale condition which is satisfied by the Riemannian length functional fails here as well. In addressing these issues, our philosophy here is that some of these difficulties can be ameliorated by working directly on the space of (timelike) {\it geodesics}, instead of that of (timelike) curves.

The purpose of this paper is twofold. First, we introduce and discuss the notion of {\it timelike geodesic homotopy} (Definition \ref{tghomotopy}), which is a restriction to geodesics of the more standard timelike homotopy of timelike loops; the new notion defines an equivalence relation on the set of timelike geodesic loops. We then proceed to show how timelike geodesic homotopies can be used, coupled with a stretching/shortening of length argument embodied in the Theorem \ref{stretch} - to which we refer, somewhat picturesquely, as the {\it Hill-Climb Lemma} - to obtain a number of new results on the existence of closed timelike geodesics in compact Lorentz manifolds (see especially Proposition \ref{newmainthm'} and Theorem \ref{newmainthm}).

 Since each timelike geodesic homotopy class consists just of timelike geodesic loops, the degrees of freedom involved in the search of closed timelike geodesics in such a class is considerably reduced, thereby simplifying certain variational arguments. Indeed, since the geodesic character of the loop is already guaranteed at the outset, the problem then reduces to that of finding some loop in the class whose initial and final velocities are the same. In order to solve the latter problem, we have developed a Lorentzian adaptation of the length-shortening/stretching argument introduced in the context of Riemannian geometry in \cite{F} by one of the authors (JLF). That simplification, however, comes at the cost of assuming the absence of certain type of conjugate points along the loops. (While this remains an important added assumption, it may well be an artifact of our particular approach.)

Another difficulty in the approach adopted here is to ensure the existence of even one timelike geodesic loop, in order to meaningfully define its timelike geodesic homotopy class. Solving this problem in full generality is a nontrivial problem in Lorentzian geometry; fortunately, it is one which can be bypassed in a number of interesting particular cases. Our examples here include Theorem \ref{thmclave3} and Proposition \ref{coolcor} below (see also \cite{costaesilva&flores&honorato21}).

The rest of the paper is organized as follows. In Section \ref{sec1} we introduce the basic definitions and conventions we will need, including the central definition of timelike geodesic homotopy (Definition \ref{tghomotopy}), as well as some technical preliminaries useful for the rest of the paper. In Section \ref{sec1.1} we establish and prove the key Hill-Climb Lemma (Theorem \ref{stretch}). As indicated above, this is a variational result in which we show that timelike geodesic loops without certain types of conjugate points can be either stretched or shortened in a given geodesic homotopy class {\it unless} they are already closed timelike geodesics. This in turn implies, in particular, that if such a timelike geodesic loop has either maximal or minimal length in a given class (we call such loops {\it locally extremal} timelike geodesic loops), then they are closed timelike geodesics. We apply these ideas in Section \ref{sec2} to obtain our main existence result for these curves in the compact Lorentzian case (Proposition \ref{newmainthm'} and Theorem \ref{newmainthm}). In the final Section \ref{sec3} we also provide alternative geometric conditions which provide bounds on the lengths of loops as required in our approach.

\section{Basic definitions \& technical preliminaries}\label{sec1}
Throughout this note, $(M,g)$ will denote a smooth Lorentzian manifold of dimension $n\geq 2$, not necessarily compact or geodesically complete in any sense. 

The exponential map on $(M,g)$ is denoted by $\exp :\mathcal{D}\subset TM\rightarrow M$, where $\mathcal{D}$ is understood to be its maximal domain, and as usual for any $p\in M$ with tangent space $T_pM$, we write $\mathcal{D}_p := \mathcal{D}\cap (T_pM)$ and $\exp _p:= \exp |_{\mathcal{D}_p}$. The {\it norm} of a vector $v \in TM$ is denoted by $|v|=|v|_g= \sqrt{|g(v,v)|}$, and we omit the subscript `$g$' when there is no risk of confusion.

We assume in what follows that the reader is familiar with at least the basic aspects of Lorentzian geometry as found in the core references \cite{bookbeem96,bookoneill83}. Yet, in order to avoid ambiguities and establish notation we briefly recall some elementary geometric notions pertaining to Lorentzian manifolds, referring to the cited textbooks for further details. 

A nonzero tangent vector $v\in T_pM$ is {\it timelike} [resp. {\it null} (aka {\it lightlike}), {\it spacelike}] if $g(v,v)<0$ [resp. $g(v,v)=0$, $g(v,v)>0$]. (In our convention, the zero vector is spacelike.) The vector $v$ is {\it causal} (or {\it nonspacelike}) if it is either timelike or null. (These appellations are collectively referred to as the {\it causal character} of tangent vectors.) The set $\mathcal{T}_p \subset T_pM$ of timelike tangent vectors in $T_pM$ has two connected components called {\it timecones} (at $p$), denoted by $\mathcal{T}^{\pm}_p$. (These are actual open convex cones in $T_pM$.) The boundary of $\mathcal{T}_p$ in $T_pM$ consists of the set $\Lambda_p$ of null vectors at $p$ together with the zero vector $0_p$. $\Lambda_p$ also has two connected components $\Lambda ^{\pm}_p$, the {\it lightcones} at $p$, which we label so that $\Lambda^{\pm}_p\cup \mathcal{T}^{\pm}_p \cup \{0_p\}$ are the respective closures of $\mathcal{T}^{\pm}_p$. Finally, the sets $\mathcal{C}^{\pm}_p := \Lambda^{\pm}_p\cup \mathcal{T}^{\pm}_p$ are called the {\it causal cones} at $p$, and together they of course comprise the set of all causal vectors at $p$, which we denote by $\mathcal{C}_p:= \mathcal{C}^{+}_p\cup \mathcal{C}^{-}_p$. 

A {\it time orientation} on $(M,g)$ is a continuous choice $ p\in M \mapsto \mathcal{C}^{+}_p \subset T_pM$ of a causal cone/timecone at each point (called then the {\it future} causal cone/timecone, whereas the other one is the {\it past} causal cone/timecone). It is well-known that any Lorentzian manifold either admits such a time-orientation or else it has a double covering manifold which does. If a time orientation can and has been chosen, $(M,g)$ is said to be {\it time-oriented}. A {\it spacetime} is then a time-oriented, connected Lorentzian manifold. 

The causal character can be extended to curves and vector fields as follows. A piecewise smooth curve $\alpha :I \subset \mathbb{R} \rightarrow M$ is {\it timelike} [resp. {\it causal}, {\it null}, spacelike] if each tangent vector $\dot{\alpha}(t) $ along $\alpha$ is of the respective causal character, and on causal curves we require in addition that the two tangent vectors at an eventual break of $\alpha$ be both in the same causal cone. (Timelike/causal/null/spacelike vector fields are defined in an obvious similar fashion.) By a {\it closed} timelike/null/causal curve we mean a piecewise curve segment $\alpha:[a,b] \rightarrow M$ of the corresponding causal type with $\alpha(a)=\alpha(b)$. We often refer to closed curves in this sense as {\it loops}.

The (Lorentzian) {\it length} of a causal curve segment $\alpha:[a,b] \rightarrow M$ is given by
$${\rm length}(\alpha) = {\rm length}_g(\alpha) = \int_a^b |\dot{\alpha}(t)|_g \, dt,$$
and again we omit the subscript `$g$' where there is no risk of confusion. (Observe that smooth reparametrizations of causal curves are still causal curves with the same length as the original curve, and that null curves have zero length in spite of being non-constant.) 

%A {\em time orientation} on the Lorentzian manifold $(M,g)$ is a continuous choice throughout $M$ of one of the causal cones - the {\it future} causal cone -  at each $p \in M$, which we shall denote by $\mathcal{C}^{+}_p$. (The opposite causal cone $\mathcal{C}^{-}_p$ is called the {\it past} causal cone.) If such a time orientation has been chosen, then the manifold is said to be {\it time-oriented}. A time-oriented connected manifold is called a {\it spacetime}. 

%Denote by $\mathcal{L}(p,q)$ the (possibly empty) set of causal curve segments connecting the points $p,q\in M$. The {\it (Lorentzian) distance} (also known as {\it time separation}) $d(p,q) \in [0,\infty]$ between the two points $p,q \in M$ is given by
%\begin{equation}\label{lordist}
  % d(p,q) = \left\{ \begin{array}{cc}
      %  \sup \{{\rm length} (\alpha) \, : \, \alpha \in \mathcal{L}(p,q)\} &  %\mbox{if $\mathcal{L}(p,q) \neq \emptyset$},\\
      % 0 & \mbox{if $\mathcal{L}(p,q) = \emptyset$}.
   %\end{array}\right.  
%\end{equation}
Although a general piecewise smooth curve need not have a definite causal character, a (non-constant) {\it geodesic} always does: causal geodesics are either timelike or null, exclusively. In particular, a causal geodesic segment $\gamma: [a,b] \rightarrow M$ is timelike if and only its length is positive:
$${\rm length}(\gamma) = |\dot{\gamma}(a)| \, (b-a) >0.$$

%Given two arbitrary points $p,q \in M$, it is often the case that no causal curve segment $\alpha:[a,b] \rightarrow M$ exists such that $\alpha(a)=p$ and $\alpha(b)=q$. If such an $\alpha$ exists, then $p$ and $q$ are said to be {\it causally related}. 
%It is well-known (cf., e.g., \cite[Lemma 3.32, p. 72]{bookoneill83}) that if two points are causally related, then the connecting curve can always be chosen to be null; hence, unlike the case of Riemannian geometry, it not of interest to ask whether a causal curve can be chosen which {\it minimizes} Lorentzian length among all causal curves connecting $p$ and $q$, but it still is of interest to ask whether such a connecting causal curve segment exists of {\it maximal} length. 

%Can a point $p\in M$ be causally related to itself, i..e., is there a closed timelike and/or causal curve segment at $p$? The general answer is 'no', because closed timelike and/or causal curves may not exist in the Lorentzian manifold $(M,g)$. But it is well-known that closed timelike curves always exist (for example) if $M$ is compact. A natural demand arises for sufficient conditions 

 Now, a (non-constant) geodesic segment $\gamma: [a,b] \rightarrow M$ on $(M,g)$ is a {\it geodesic loop} if $\gamma(a)=\gamma(b)$, and such a geodesic loop is a {\it closed} geodesic
% \footnote{Following standard usage, we have introduced a slight ambiguity in the term 'closed' here: while we require proportional initial and final velocity for closed {\it geodesics}; for general curves `closed' has no such restriction. In other words, `closed' is synonymous with `loop' unless we are dealing with geodesics.} 
 if $\dot{\gamma}(a) = c\cdot \dot{\gamma}(b)$ for some number $c>0$. (In the latter case, if $\gamma$ is not null then one necessarily has $c\equiv 1$, so the geodesic is periodic \cite[Thm. 7.13, p. 192]{bookoneill83}.) 
 
 As discussed in the Introduction, in general one has neither a positive lower bound nor a finite upper bound on Lorentzian lengths of timelike loops. Hence, in order to find closed timelike geodesics via variational methods one must restrict the problem accordingly. As first discussed by Galloway \cite{galloway}, it is natural to impose that lengths of curves in the {\it timelike homotopy class} of a timelike loop admit a uniform upper bound. 
 
 Recall that two timelike loops $\alpha_1, \alpha_2$ are said to be {\it (freely) timelike homotopic} (or {\it $t$-homotopic} for short) if there exists a free homotopy between $\alpha_1$ and $\alpha_2$ whose longitudinal curves are all timelike loops. Here, however, we are interested in timelike {\it geodesic} loops; in that context, we will need a more stringent notion.  
 
 \begin{defi}[TG-homotopy]\label{tghomotopy}
 Two timelike geodesic loops $\gamma _i: [a,b] \rightarrow M$ ($i=1,2$) on $(M,g)$ are (freely) {\it $TG$-homotopic} if there exists a continuous map $\sigma: [0,1]\times [a,b]\rightarrow M$ such that 
 \begin{itemize}
     \item[i)] $\sigma(0,t) = \gamma_1(t)$ and $\sigma(1,t) = \gamma_2(t)$, $\forall t \in [a,b]$;
     \item[ii)] for each $s\in [0,1]$ the longitudinal curve $\sigma_s: t \in [a,b] \mapsto \sigma(s,t) \in M$ is a timelike geodesic loop. 
 \end{itemize}
 \end{defi}
In other words, two timelike geodesic loops are $TG$-homotopic if they admit a free homotopy via timelike {\it geodesic} loops. This clearly defines an equivalence relation on the set of timelike geodesic loops, and the equivalence classes are naturally dubbed {\it $TG$-homotopy classes}. In particular, $TG$-homotopic timelike geodesic loops are $t$-homotopic, so the $t$-homotopy class of a timelike geodesic loop $\gamma$ properly contains the $TG$-homotopy class of $\gamma$. 

This discussion implies that {\it any} upper bound on the lengths of timelike loops in a given $t$-homotopy class of a timelike geodesic loop is {\it ipso facto} an upper bound on the lengths of timelike geodesic loops in the $TG$-homotopy class of said loop. To be more precise, let $\gamma$ be a timelike geodesic loop, and denote by $\mathfrak{T}(\gamma)$ its $t$-homotopy class, and by $\mathfrak{TG}(\gamma)$ its $TG$-homotopy class. Then 
$$\mathfrak{TG}(\gamma)\subset \mathfrak{T}(\gamma),$$
and therefore,   
\begin{equation}\label{upperbound}
    L_{\mathfrak{TG}}(\gamma) := \sup _{\alpha \in \mathfrak{TG}(\gamma)} {\rm length}(\alpha) \leq \sup _{\alpha \in \mathfrak{T}(\gamma)} {\rm length}(\alpha) =: L_{\mathfrak{T}}(\gamma). 
\end{equation}
However, the use of timelike geodesic loops rather than just timelike loops has an added bonus: there may well be a {\it positive lower bound} on the lengths of timelike geodesic loops in the $TG$-homotopy class $\mathfrak{TG}(\gamma)$ although the lower bound of the lengths in $\mathfrak{T}(\gamma)$ is zero, i.e., we may have 
\begin{equation}\label{lowerbound}
    l_{\mathfrak{TG}}(\gamma) := \inf _{\alpha \in \mathfrak{TG}(\gamma)} {\rm length}(\alpha) >0 = \inf _{\alpha \in \mathfrak{T}(\gamma)} {\rm length}(\alpha) =: l_{\mathfrak{T}}(\gamma). 
\end{equation}

The following simple examples illustrate these considerations.
\begin{exe}\label{mink}
In two-dimensional Minkowski spacetime $(\hat{M},\hat{g})= (\mathbb{R}^2, -dt^2 + dx^2)$ (with standard time orientation such that $\partial/\partial t$ is future-directed) we introduce the isometric identification along the (timelike) $t$-axis:
$$(t,x) \sim (t+1,x).$$
The resulting quotient is of course a Lorentz manifold $(M,g)$, and topologically a cylinder. Consider the timelike geodesic loop $\gamma$ on $(M,g)$ which is the image of the timelike geodesic segment $\hat{\gamma}:t \in [0,1]\mapsto (t,0)\in \hat{M}$ by the canonical projection $\pi:\hat{M} \rightarrow M$. It is evident that $\mathfrak{TG}(\gamma)$ consists only of timelike geodesic loops $\eta$ with ${\rm length }(\eta)=1$. Therefore 
$$l_{\mathfrak{TG}}(\gamma)=L_{\mathfrak{TG}}(\gamma)=1.$$
Meanwhile, when we consider general timelike loops one still has $L_{\mathfrak{T}}(\gamma)=1$, but now $l_{\mathfrak{T}}(\gamma)=0$, since tangent vectors of these more general curves can come arbitrarily close to lightlike directions. 
\end{exe}
\begin{exe}\label{antidesitter}
Consider, in $\mathbb{R}^3$, the $2$-dimensional one-sheeted hyperboloid $M$ given by
$$ x^2+ y^2-z^2 =1,$$
and the symmetric bilinear form $\langle \, . \, , \, . \, \rangle$ of index 2 given by 
$$\langle (v_1,v_2,v_3),(w_1,w_2,w_3)\rangle = -v_1w_1-v_2w_2 + v_3w_3.$$
Upon identifying $T_p\mathbb{R}^3 \simeq \mathbb{R}^3$ for each $p \in M$ this symmetric form induces a Lorentzian metric $g$, so the hyperboloid $M$ becomes a Lorentz manifold $(M,g)$ whose universal covering is the $2$-dimensional {\it anti-de Sitter} spacetime \cite[p. 131]{bookhawking&ellis73}, which unlike the previous example is not globally hyperbolic. All geodesics thereon are reparametrizations of the intersections of planes through the origin with $M$ \cite[Ch. 4]{bookoneill83}). In particular, the closed timelike geodesics through a point $p \in M$ (observe that $\langle p,p\rangle =-1$) are described as follows. Fix any vector $e_1 \in \mathbb{R}^3$ such that $\langle e_1,e_1\rangle =-1$ and $\langle p,e_1\rangle =0$ and consider the plane $\Pi = span \, \{p,e_1\}$. The closed curve $\alpha: t \in [0,2\pi] \rightarrow \mathbb{R}^3$ given by 
$$\alpha(t) = (\cos \, t) \cdot p + (\sin \, t) \cdot e_1 $$
clearly parametrizes the intersection $\Pi \cap M$ and is thus a timelike geodesic with length $2\pi$. Since any timelike geodesic loop in the $t$-homotopy class of $\alpha$ is of this form, they will all have the same length $2\pi$. Then again
$$l_{\mathfrak{TG}}(\gamma)=L_{\mathfrak{TG}}(\gamma)=2\pi.$$

\end{exe}

One tricky aspect in dealing with timelike geodesic loops is their very existence; thus a more immediate concern is to obtain sufficient conditions for the existence of {\it at least one} timelike geodesic loop in $(M,g)$, let alone a whole $TG$-homotopy class. First, one may consider the following simple criterion. 
\begin{prop}\label{easypeasy}
Assume there exists a Lorentzian covering\footnote{Recall that if $(\hat{M},\hat{g})$ is a Lorentzian manifold, a map $\phi:\hat{M}\rightarrow M$ is a {\it Lorentzian covering map} if it is a smooth covering map for which $\hat{g} = \phi^{\ast}g$; in particular it is a local isometry.} $\phi: (\tilde{M},\tilde{g}) \rightarrow (M,g)$ where $(\tilde{M},\tilde{g})$ is a globally hyperbolic spacetime. In this case, if $(M,g)$ admits a timelike loop, then it also admits a timelike geodesic loop.
\end{prop}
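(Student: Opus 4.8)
The plan is to exploit global hyperbolicity of the cover through the Avez--Seifert theorem. Roughly: lift a given timelike loop on $(M,g)$ to the cover $(\tilde M,\tilde g)$; the lift joins two distinct points lying over the same base point and hence chronologically related; Avez--Seifert then produces a length-maximizing timelike geodesic between them, whose projection back down to $M$ is the desired timelike geodesic loop.

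In detail, I would start from a timelike loop $\alpha:[a,b]\to M$ with $\alpha(a)=\alpha(b)=p$. Fixing $\tilde p\in\phi^{-1}(p)$, the covering map admits a unique lift $\tilde\alpha:[a,b]\to\tilde M$ of $\alpha$ with $\tilde\alpha(a)=\tilde p$ and $\phi\circ\tilde\alpha=\alpha$. Since $\phi$ is a local isometry, each $\phi_*$ is a linear isometry carrying timecones onto timecones; after flipping the time orientation of $\tilde M$ if necessary we may take $\phi$ to be time-orientation preserving, so $\tilde\alpha$ is again a timelike curve (the ``same causal cone at the breaks'' condition being preserved under the isometries $\phi_*$). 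Writing $\tilde q:=\tilde\alpha(b)$, we have $\phi(\tilde q)=\alpha(b)=p$, whence $\tilde q\in\phi^{-1}(p)$ as well.

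Next I would observe that necessarily $\tilde p\neq\tilde q$: otherwise $\tilde\alpha$ would be a closed timelike curve in $\tilde M$, contradicting the causality that global hyperbolicity entails. Thus $\tilde p$ and $\tilde q$ are distinct and joined by a future- (or past-) directed timelike curve, so they are chronologically related and the Lorentzian distance satisfies $d(\tilde p,\tilde q)>0$ on $\tilde M$. By the Avez--Seifert theorem, global hyperbolicity guarantees a causal geodesic $\tilde\gamma$ from $\tilde p$ to $\tilde q$ whose length equals $d(\tilde p,\tilde q)$; as this length is positive, $\tilde\gamma$ is a non-constant \emph{timelike} geodesic.

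Finally, setting $\gamma:=\phi\circ\tilde\gamma$ and invoking once more that $\phi$ is a local isometry, $\gamma$ is a non-constant timelike geodesic in $M$ whose endpoints are $\phi(\tilde p)=\phi(\tilde q)=p$; that is, $\gamma$ is a timelike geodesic loop, as claimed. The argument is essentially a curve-lifting and projection, so the only points demanding real care are ensuring the two endpoints of the lift are distinct (where the causality of $\tilde M$ is used) and applying Avez--Seifert so as to obtain a \emph{timelike}, rather than merely causal, maximizer; I expect the latter to be the crux.
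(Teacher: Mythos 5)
Your proposal is correct and follows essentially the same route as the paper's own proof: lift the timelike loop to the globally hyperbolic cover, note the endpoints of the lift are distinct by causality, apply Avez--Seifert to obtain a maximizing causal geodesic (necessarily timelike since the two points are chronologically related), and project back down. The extra care you take in justifying that the maximizer is timelike rather than merely causal is exactly the point the paper compresses into ``maximal --- and thus necessarily timelike.''
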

\noindent {\it Proof.} Let $\alpha: [0,1] \rightarrow M$ be a timelike curve with $p:=\alpha(0)=\alpha(1)$, and pick any $\tilde{p} \in \tilde{M}$ such that $\phi(\tilde{p})=p$. Consider the lift $\tilde{\alpha}:[0,1] \rightarrow \tilde{M}$
 of $\alpha$ through $\phi$ starting at $\tilde{p}$. Since $\phi$ is a local isometry, $\tilde{\alpha}$ is a timelike curve. From standard causal properties of globally hyperbolic spacetimes $\tilde{\alpha}(0)\neq \tilde{\alpha}(1)$, and there exists a maximal - and thus necessarily timelike - causal geodesic $\tilde{\gamma}:[0,1] \rightarrow \tilde{M}$ with $\tilde{\gamma}(0)=\tilde{\alpha}(0)$ and $\tilde{\gamma}(1)= \tilde{\alpha}(1)$. Thus, $\gamma := \phi \circ \tilde{\gamma}$ is a timelike geodesic loop in $(M,g)$ at $p$. 
 \qcd

The previous proposition becomes particularly interesting when $M$ is compact; then it is well-known that a timelike loop always exists (cf., e.g., \cite[Lemma 14.10, p. 407]{bookoneill83}), in which case the situation contemplated in Prop. \ref{easypeasy} ensures that $(M,g)$ will also contain some timelike geodesic loop. In this particular context - $(M,g)$ compact and with a globally hyperbolic Lorentzian covering - the following (far from exhaustive) list of conditions are known to guarantee finiteness of $L_{\mathfrak{T}}(\gamma)$ in (\ref{upperbound}), and hence of $L_{\mathfrak{TG}}(\gamma)$, for a given timelike geodesic loop $\gamma$. (These have been already mentioned in the Introduction, and we repeat them here merely to bring the issue into sharper focus.) 

\begin{itemize}
    \item[1)] If $\mathfrak{T}(\gamma)$ is a {\it stable} $t$-homotopy class in the sense of Ref. \cite{galloway}, then $L_{\mathfrak{TG}}(\gamma)(\leq L_{\mathfrak{T}}(\gamma))<\infty$. This is a somewhat technical condition, but in that reference, Galloway gives sufficient, geometrically motivated conditions to ensure it occurs. Of particular interest vis-à-vis Prop. \ref{easypeasy} above is the demand - first considered by Tipler in \cite{tipler} - that $(M,g)$ is compact and admits a (regular) covering by a globally hyperbolic spacetime $(\tilde{M},\tilde{g})$ with compact Cauchy hypersurfaces. In this case, $\mathfrak{T}(\alpha)$ is actually stable for {\it any} timelike loop $\alpha$ in $(M,g)$, and in particular for timelike geodesic loops. 
    \item[2)] Guediri \cite{guediri02,guediri03,guediri07} extensively and specifically studied the context when $(M,g)$ is compact and admits a regular covering by a globally hyperbolic spacetime $(\tilde{M},\tilde{g})$, but not necessarily with compact Cauchy hypersurfaces. In this ambient, he displays concrete examples of such a situation in which $(M,g)$ does not admit any closed timelike geodesic, and in particular he shows (cf. \cite[Thm. 3.1]{guediri07}) that for any timelike loop $\alpha$ in $(M,g)$, $L_{\mathfrak{T}}(\alpha)<\infty$ if and only if $\mathfrak{T}(\alpha)$ contains a largest-length representative. This will occur, for instance, if the regular covering admits an abelian group of deck transformations \cite{guediri02}. 
\end{itemize}

We now turn to alternative conditions meant to ensure the existence of timelike geodesic loops in $(M,g)$ when the latter is not in principle covered by some globally hyperbolic spacetime. Our first observation is that this problem can be linked to the broader issue of {\it causal geodesic connectedness}, i.e., the question of when two points $p,q\in M$ which can be connected by a causal curve segment can also be connected by a causal geodesic. After all, a timelike geodesic loop connects a point to itself in this way.

Although causal geodesic connectedness is a classic problem in Lorentzian geometry, there are few general results which do not assume either global hyperbolicity or larger symmetry, but we have recently \cite{costaesilva&flores&honorato21} been able to make some new contributions to this issue. For the remainder of this section we shall state a few results and concepts we have found useful in \cite{costaesilva&flores&honorato21} insofar as they shed light on our concerns here. We indicate herein the numbers of definitions and theorems in that reference for the convenience of the reader interested in looking up the proofs (which we do not reproduce here), but we have slightly altered the notation and scope to suit our present needs. 

\begin{defi}[Def. 5.1 in \cite{costaesilva&flores&honorato21}]\label{def1}
Let $p\in M$. We say that $\exp _p$ has the {\it causal continuation property} (CCP) if for any (piecewise smooth) causal curve $\beta:[0,1] \rightarrow M$ with $\beta(0)=p$, and for any continuous curve $\overline{\beta}:[0,a)\subset [0,1] \rightarrow \overline{\mathcal{T}_p}$ such that $\overline{\beta}(0)=0_p$ and 
$$\exp_p\circ \overline{\beta} = \beta|_{[0,a)}$$
there exists a sequence $(t_k)_{k \in \mathbb{N}}\subset [0,a)$ with $t_k\rightarrow a$ for which $\overline{\beta}(t_k)$ converges to some $\overline{x}\in \mathcal{D}_p$ (and thus $\overline{x} \in \overline{\mathcal{T}_p}  $). 
\end{defi}

The previous notion allows one to obtain sufficient conditions to establish the existence of a {\it timelike} geodesic in $M$ from the point $p$ to another point $q$. Here and hereafter, we denote by $Conj_c(p)$ the set of conjugate points to $p$ along {\it causal} geodesics starting at $p$. 

\begin{thm}[Thm. 5.5 in \cite{costaesilva&flores&honorato21}]\label{thmclave3}
Let $(M, g)$ be a Lorentz manifold and $p \in M$; assume that $\exp _p$ has the CCP. Let $q\in M$ and assume there exists a timelike curve $\alpha:[0,1] \rightarrow M$ with $\alpha(0) =p$, $\alpha(1) =q$ which does not intersect $Conj_c(p)$. Then there exists a timelike geodesic from $p$ to $q$. In particular, if $p=q$ so that $\alpha$ is a timelike loop, then there exists a timelike geodesic loop at $p$. 
\end{thm}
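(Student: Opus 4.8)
The plan is to reduce the statement to the construction of a continuous lift of $\alpha$ through the exponential map. Concretely, I would seek a continuous curve $\overline{\alpha}:[0,1]\to\overline{\mathcal{T}_p}$ with $\overline{\alpha}(0)=0_p$ and $\exp_p\circ\overline{\alpha}=\alpha$; the endpoint $v:=\overline{\alpha}(1)$ then satisfies $\exp_p(v)=q$, and the radial geodesic $s\mapsto\exp_p(sv)$ runs from $p$ to $q$. The whole argument is aimed at producing such a lift and at showing that $v$ is in fact \emph{timelike}, so that this radial geodesic is timelike rather than merely causal. Two ingredients drive this: the hypothesis that $\alpha$ avoids $Conj_c(p)$, which makes $\exp_p$ a local diffeomorphism at every point the lift visits (so the lift exists and is locally unique), and the CCP, which prevents the lifted vectors from escaping to infinity in $T_pM$ and thereby lets us close up the construction at the end of the parameter interval.

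For the construction I would use a continuity argument. Since $d\exp_p|_{0_p}=\mathrm{id}$ and $\alpha$ is timelike, on a normal neighbourhood of $p$ the curve lifts uniquely with $\overline{\alpha}(t)\in\mathcal{T}_p^{+}$ for small $t>0$. Let $a$ be the supremum of those $\tau\in(0,1]$ admitting a (necessarily unique) continuous lift $\overline{\alpha}:[0,\tau)\to\overline{\mathcal{T}_p}$. Openness is immediate: at any lift point $\overline{\alpha}(t)$ the radial geodesic to $\alpha(t)$ is causal and $\alpha(t)\notin Conj_c(p)$, so $\overline{\alpha}(t)$ is a regular point of $\exp_p$, which is then a local diffeomorphism there and permits extension of the lift. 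To close up at $t=a$ I would apply the CCP to $\beta=\alpha$ and $\overline{\beta}=\overline{\alpha}|_{[0,a)}$: it yields a sequence $t_k\to a$ with $\overline{\alpha}(t_k)\to\overline{x}\in\mathcal{D}_p$, and continuity gives $\exp_p(\overline{x})=\alpha(a)$. Since $\alpha(a)\notin Conj_c(p)$ and $\overline{x}\in\overline{\mathcal{T}_p}$ is causal, $\overline{x}$ is again a regular point; uniqueness of the local inverse then upgrades the subsequential limit to the full limit $\overline{\alpha}(t)\to\overline{x}$, so the lift extends continuously to $[0,a]$. If $a<1$, the same local-diffeomorphism argument extends the lift past $a$, contradicting maximality; hence $a=1$ and the lift reaches $t=1$.

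The crux — and the step I expect to be the main obstacle — is showing that the lift never touches the null boundary, i.e. $\overline{\alpha}(t)\in\mathcal{T}_p^{+}$ for every $t\in(0,1]$, since only then is the resulting radial geodesic timelike. I would argue by contradiction using the Gauss Lemma. Suppose $t_0>0$ is the first time the lift is null, $\overline{\alpha}(t_0)=:w\in\Lambda_p^{+}$, so that $h(t):=g_p(\overline{\alpha}(t),\overline{\alpha}(t))$ is negative on $(0,t_0)$ and vanishes at $t_0$; its (left) derivative therefore satisfies $h'(t_0)\geq 0$. On the other hand $w\notin Conj_c(p)$ makes $\overline{\alpha}$ differentiable at $t_0$, and the Gauss Lemma \cite[Ch.~5]{bookoneill83} yields
$$\tfrac{1}{2}h'(t_0)=g_p(\overline{\alpha}(t_0),\overline{\alpha}'(t_0))=g_{\alpha(t_0)}\!\left(\tfrac{d}{ds}\Big|_{s=1}\exp_p(sw),\,\alpha'(t_0)\right).$$
The first slot is the future-null velocity of the radial null geodesic at its endpoint, while $\alpha'(t_0)$ is future timelike; the reverse Cauchy--Schwarz inequality for future causal vectors then forces this inner product to be \emph{strictly negative}, so $h'(t_0)<0$, a contradiction. (At a break point of $\alpha$ one runs the identical computation with left derivatives.) Hence the lift stays in the open future cone and $v=\overline{\alpha}(1)$ is timelike.

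Finally, $s\mapsto\exp_p(sv)$ is the desired timelike geodesic from $p$ to $q$. In the loop case $p=q$ one needs, in addition, that $v\neq 0_p$: were $v=0_p$, then $\overline{\alpha}(t)\in\mathcal{T}_p^{+}$ for $t\to 1^-$ together with $d\exp_p|_{0_p}=\mathrm{id}$ would force the incoming velocity $\overline{\alpha}'(1^-)=\alpha'(1^-)$ into the \emph{past} cone, contradicting that $\alpha$ is future timelike. Thus $v$ is a nonzero future timelike vector and $s\mapsto\exp_p(sv)$ is a timelike geodesic loop at $p$. The essential difficulty throughout is the confinement of the lift to the \emph{open} timecone (the Gauss-Lemma step), with the CCP doing the indispensable work of preventing the lifted curve from running off to infinity at the end of the parameter interval.
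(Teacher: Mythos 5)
Note first that the paper does not actually reproduce a proof of Theorem \ref{thmclave3}: it is quoted from \cite{costaesilva&flores&honorato21} (Thm.\ 5.5) with the proof explicitly deferred to that reference, so your argument can only be judged on its own terms. That said, your strategy is clearly the intended one: the CCP of Definition \ref{def1} is tailor-made for exactly the open--closed lifting argument you run (non-conjugacy gives local invertibility of $\exp_p$ for the openness/extension step, the CCP supplies the limit point $\overline{x}\in\mathcal{D}_p$ for the closedness step, and the Gauss Lemma confines the lift to the open timecone). The skeleton is correct and complete.

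Two soft spots in the confinement step deserve repair. (i) The boundary of $\mathcal{T}^{+}_p$ in $T_pM$ is $\Lambda^{+}_p\cup\{0_p\}$, so the first exit time $t_0$ could a priori occur with $\overline{\alpha}(t_0)=0_p$ rather than with a null vector; there your Gauss-Lemma computation degenerates (the radial velocity $N$ vanishes), and your separate treatment of the value $0_p$ only covers $t=1$. (ii) At a genuinely null $w=\overline{\alpha}(t_0)$, the strict inequality $g(N,\alpha'(t_0))<0$ needs $N$ and $\alpha'(t_0)$ to lie in the \emph{same} causal cone of $T_{\alpha(t_0)}M$; since the theorem is stated for a Lorentz manifold with no time orientation assumed, ``future'' for $N$ (propagated along the radial geodesic from $p$) and ``future'' for $\alpha'(t_0)$ (propagated along $\alpha$) are defined along two different paths, and their agreement is not automatic. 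Both points are cured at once by globalizing your own computation: on the maximal subinterval where $\overline{\alpha}(t)$ is timelike and nonzero, the Gauss Lemma gives $\tfrac12 h'(t)=g_p\bigl(\overline{\alpha}(t),\overline{\alpha}'(t)\bigr)=g\bigl(N_t,\alpha'(t)\bigr)$ with $N_t$ timelike, so $h'$ never vanishes there, and its sign is preserved across breaks of $\alpha$ because the two one-sided velocities share a causal cone. Since $h(0)=0$ and $h<0$ immediately after, $h'<0$ somewhere near $t=0$, hence everywhere on that subinterval; thus $h$ is strictly decreasing and satisfies $h(t)\leq h(\epsilon)<0$ for $t\geq\epsilon$, so the lift can reach neither $\Lambda_p$ nor $0_p$. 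This simultaneously shows that $v=\overline{\alpha}(1)$ is timelike and nonzero, rendering your final paragraph (and its own time-orientation subtlety) unnecessary.
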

\qcd
\begin{rem}\label{remflaherty}
The use of the CCP is shown to be mandatory in the cited reference, but we suspect - although we have not been able to prove yet - that the ban on conjugate points along causal geodesics may be an artifact of our particular approach. In any case, the following well known geometric condition ensures they are altogether absent from $(M,g)$. Suppose that for any $p \in M$ and for any {\it timelike plane} $\Pi_p \subset T_pM$, the sectional curvatures $K(\Pi_p)\geq 0$. Then for any $p \in M$, we have $Conj_c(p) =\emptyset$ by \cite[Prop. 2.1]{Flaherty1} (cf. also \cite[Prop. 11.13]{bookbeem96}). 
\end{rem}

In view of the somewhat technical character of the condition that the exponential map has the CCP in Thm. \ref{thmclave3}, it is of interest to give a natural geometric condition which entails it. It turns out that {\it causal pseudoconvexity and causal disprisonment} are natural such conditions. Recall that a collection $\mathcal{C}$ of (non-constant) geodesics on $(M,g)$ is 
\begin{itemize}
\item[a)] {\it pseudoconvex} if for any compact set $K\subset M$ there exists a compact set $K^{\ast}\subset M$ such that any segment of a geodesic in $\mathcal{C}$ with endpoints in $K$ is entirely contained in $K^{\ast}$;
%\item[b)] {\it imprisoning} if there exists at least one inextendible geodesic $\gamma:(a,b)\rightarrow M$ in $\mathcal{C}$ ($-\infty\leq a<b\leq +\infty$) having the property that for some $t_0\in (a,b)$, either $\overline{\gamma[t_0,b)}$ is compact or $\overline{\gamma (a,t_0]}$ is compact. If $\mathcal{C}$ is not imprisoning, then it is said to be {\it disprisoning}. 
\item[b)] {\it disprisoning} if for any given maximal extension $\gamma:(a,b)\rightarrow M$ of a geodesic in $\mathcal{C}$ ($-\infty\leq a<b\leq +\infty$), and any $t_0\in (a,b)$, neither $\overline{\gamma[t_0,b)}$ nor $\overline{\gamma (a,t_0]}$ is compact. If $\mathcal{C}$ is not disprisoning, then it is said to be {\it imprisoning}. 
\end{itemize}
If we take $\mathcal{C}$ to be the collection of all causal geodesics and if it is pseudoconvex/disprisoning, then $(M,g)$ itself is said to be {\it causally pseudoconvex}/{\it causally disprisoning}, respectively. In this paper, by the term {\it causally pseudoconvex and disprisoning} we always mean causally pseudoconvex {\em and } causally disprisoning. 

It is well-known (cf., e.g., \cite[Prop. 7.36]{bookbeem96}) that if a spacetime $(M,g)$ is globally hyperbolic, then it is causally pseudoconvex and disprisoning. The converse, however, is false. This is illustrated by the strip $\{(t,x) \in \mathbb{R}^2 \, : \, 0<x<1\}$ (with the restricted metric) in the Minkowski plane $(\mathbb{R}^2,-dt^2 +dx^2)$. In other words, causal pseudoconvexity and disprisonment is a strictly weaker requirement on $(M,g)$ or its coverings than global hyperbolicity. 

The incorporation of causal pseudoconvexity and disprisonment in the problem of causal connectedness in general and the existence of timelike geodesic loops in particular can summarized as follows (see discussion around Corollary 5.8 in \cite{costaesilva&flores&honorato21}). 

\begin{prop}\label{coolcor} Let $(M,g)$ be a Lorenztian manifold such that $Conj_c(p) =\emptyset$ for any $p \in M$ (which in particular occurs if $(M,g)$ has non-negative sectional curvatures on timelike planes, cf. Remark \ref{remflaherty}). If $(M,g)$ admits a Lorentzian covering $\phi:(\hat{M},\hat{g})\rightarrow (M,g)$ such that $(\hat{M},\hat{g})$ is causally pseudoconvex and disprisoning, then for any $p,q \in M$ which can be connected by a timelike curve, there exists a timelike geodesic connecting $p$ and $q$. Thus, if in addition $(M,g)$ is compact, then there exists a timelike geodesic loop on $(M,g)$. 
\end{prop}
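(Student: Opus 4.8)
The plan is to reduce the whole statement to a single property of the exponential map --- the causal continuation property (CCP) of Definition~\ref{def1} --- and then to extract the CCP from the hypotheses on the covering. I would first observe that the compact case follows from the first assertion: a compact Lorentzian manifold always carries a timelike loop (by \cite[Lemma~14.10]{bookoneill83}, passing to a time-orientable double cover if necessary), so taking $p=q$ to be a point on such a loop and applying the first assertion yields a timelike geodesic loop at $p$. It therefore suffices to prove that any $p,q$ joined by a timelike curve $\alpha$ are joined by a timelike geodesic. Here the hypothesis $Conj_c(p)=\emptyset$ is decisive: it makes the condition ``$\alpha$ does not meet $Conj_c(p)$'' in Theorem~\ref{thmclave3} automatic, so the entire proposition follows \emph{as soon as $\exp_p$ is shown to have the CCP for every $p\in M$}. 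This reduction is routine; all the work is in verifying the CCP, and this is the only step that uses the covering.

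To verify the CCP I would take $p\in M$ together with the data of Definition~\ref{def1}: a causal curve $\beta:[0,1]\to M$ with $\beta(0)=p$ and a continuous lift $\overline\beta:[0,a)\to\overline{\mathcal T_p}$ with $\overline\beta(0)=0_p$ and $\exp_p\circ\overline\beta=\beta|_{[0,a)}$. Fix $\hat p\in\hat M$ over $p$; since $\phi$ is a local isometry, $d\phi_{\hat p}:T_{\hat p}\hat M\to T_pM$ is a linear isometry, and the vectors $v_t:=(d\phi_{\hat p})^{-1}\overline\beta(t)$ are causal. Using naturality of the exponential under local isometries together with uniqueness of lifts, the map $t\mapsto\exp^{\hat M}_{\hat p}(v_t)$ is the $\phi$-lift $\hat\beta$ of $\beta$ starting at $\hat p$; in particular $\exp^{\hat M}_{\hat p}(v_t)=\hat\beta(t)$, so for each $t$ the radial segment $s\mapsto\exp^{\hat M}_{\hat p}(s\,v_t)$, $s\in[0,1]$, is a causal geodesic from $\hat p$ to $\hat\beta(t)$ with endpoints in the compact set $K:=\hat\beta([0,a])$. \emph{Pseudoconvexity} of $(\hat M,\hat g)$ then supplies a single compact $K^\ast\supset K$ containing \emph{all} of these segments at once.

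The core of the argument rests on one principle, furnished by \emph{disprisonment}: no non-constant maximal causal geodesic issuing from $\hat p$ can have its image contained in $K^\ast$. I would use this twice. First, fixing an auxiliary Riemannian norm $\|\cdot\|$, suppose $\|v_{t_k}\|\to\infty$ along some $t_k\to a$; after passing to a subsequence the unit vectors $v_{t_k}/\|v_{t_k}\|$ converge to a causal $w$, and reparametrizing the confined segments shows (via continuous dependence on initial data and closedness of $K^\ast$) that the maximal geodesic in the direction $w$ stays in $K^\ast$, contradicting the principle. Hence $\{v_t\}$ is bounded and I can extract $t_k\to a$ with $v_{t_k}\to\overline x_{\hat M}\in T_{\hat p}\hat M$. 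Second, letting $\sigma:[0,\omega)\to\hat M$ be the maximal geodesic with $\dot\sigma(0)=\overline x_{\hat M}$, continuous dependence gives $\sigma(s)=\lim_k\exp^{\hat M}_{\hat p}(s\,v_{t_k})\in K^\ast$ for every $s<\min(\omega,1)$; were $\omega\le 1$ this would confine $\sigma$ to $K^\ast$ on its whole maximal domain, again contradicting the principle. Thus $\omega>1$, i.e. $\overline x_{\hat M}\in\mathcal D_{\hat p}$ and $\exp^{\hat M}_{\hat p}(\overline x_{\hat M})=\hat\beta(a)$. Pushing forward, $\overline x:=d\phi_{\hat p}(\overline x_{\hat M})\in\mathcal D_p$ (by the same naturality relation), $\overline x\in\overline{\mathcal T_p}$ as a limit of causal vectors, and $\overline\beta(t_k)=d\phi_{\hat p}(v_{t_k})\to\overline x$ --- which is exactly the CCP.

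The step I expect to be the real obstacle is the joint calibration of the two hypotheses in this last paragraph: pseudoconvexity must be invoked \emph{first}, to trap every competing radial geodesic inside one fixed compact $K^\ast$, and only then can disprisonment be leveraged --- both to keep the initial velocities $v_t$ from running off to infinity and to force the limiting geodesic to remain defined up to parameter $1$. Getting the quantifiers in the right order (one $K^\ast$ for all $t$, valid on the maximal domain of each limit geodesic) and handling the transfer between $M$ and $\hat M$ through the covering isometry is the delicate part; the complete argument underlies the discussion around Corollary~5.8 in \cite{costaesilva&flores&honorato21}.
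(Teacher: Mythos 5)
Your proposal is correct and follows exactly the route the paper intends: the paper gives no in-text proof but delegates to the companion reference, where the result is obtained precisely by combining Theorem \ref{thmclave3} (with the conjugate-point hypothesis made vacuous by $Conj_c(p)=\emptyset$) with the fact that causal pseudoconvexity and disprisonment of the covering force the CCP — equivalently, properness of the exponential map on causal vectors, which is the same fact the paper invokes as \cite[Thm 3.5]{costaesilva&flores&honorato21} in the proof of Theorem \ref{newmainthm}. Your explicit verification of the CCP in the cover (trapping the radial segments with pseudoconvexity first, then using disprisonment twice) is a sound reconstruction of that cited argument.
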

\qcd

%\section{A novel existence result for closed timelike geodesics}\label{sec2}
\section{The space of timelike geodesic loops}\label{sec1.1}

%In this section we proceed to derive new sufficient conditions for the existence of closed timelike geodesics on a Lorentz manifold $(M,g)$.
As justified in the previous sections, we wish to deal directly with the space of timelike geodesic loops, and develop variational arguments directly thereon. To this end, we describe here a convenient parametrization for it. We also obtain here our first important result, which we shall informally refer to as the {\it Hill-Climb Lemma}. This will be crucial later on to establish the existence of closed geodesics, but it has independent interest.

In order to obtain a convenient parametrization of the space of timelike geodesic loops, consider the smooth map $E:\mathcal{D} \subset TM \rightarrow M \times M$ given by $E(v):=(\pi(v),\exp(v))$ for $v \in \mathcal{D}$, where $\pi$ is the natural projection of the tangent bundle $TM$ onto $M$. Let 
$$\mathcal{D}_{\mathcal{T}}:= \mathcal{D}\cap \left(\bigcup _{p\in M}\mathcal{T}_p\right)$$
be the set of al timelike vectors in the domain of the exponential map. Observe that $\mathcal{D}_{\mathcal{T}}$ is still open in $TM$, and consider the restriction $E_{\mathcal{T}}:= E|_{\mathcal{D}_{\mathcal{T}}}$.

If we denote by $\Delta_M\subset M\times M$ the diagonal in $M\times M$, i.e., the set of all points of the form $(p,p)$, then the set of timelike geodesic loops is in bijective correspondence with
\begin{equation}\label{newcooleq}
\mathcal{L} := E_{\mathcal{T}}^{-1}(\Delta_M),
\end{equation}
provided one regards all geodesic loops as affinely parametrized\footnote{This is not necessarily the most convenient affine parametrization for geodesics in all circumstances, and in some calculations we shall use another in which velocities are timelike unit vectors. The description can be easily be modified to reflect this. } on the interval $[0,1]$, and a given timelike geodesic loop $\gamma: t\in [0,1]\mapsto \exp(t\cdot v) \in M$ is identified with its initial velocity $v$. We then endow the set of timelike geodesic loops $\gamma:[0,1] \rightarrow M$ with the unique topology for which the correspondence $\gamma \mapsto \dot{\gamma}(0) \in \mathcal{L}$ is a homeomorphism (for $\mathcal{L}$ with the subspace topology inherited from $TM$). This is the space of timelike geodesic loops.
%The following features of $\mathcal{L}$ are not difficult to ascertain.}
\begin{rem}\label{callitup}
The following features of $\mathcal{L}$ are not difficult to ascertain.
\begin{itemize}
    \item[1)] Since $\Delta_M$ is the graph of the identity function on $M$, it is of course a closed $n$-dimensional submanifold of $M\times M$. Thus, $\mathcal{L}$ is closed in $\mathcal{D}_{\mathcal{T}}$.
    %\item[2)] \textcolor{red}{$TG$-homotopy classes correspond precisely to the path-connected components of $\mathcal{L}$.}
    \item[2)] $\mathcal{L}$ does not need to be a submanifold of $TM$, but it does have an $n$-dimensional submanifold structure around points of $\mathcal{L}$ where $E_{\mathcal{T}}$ is nonsingular. Note that $E$ is nonsingular at $v\in \mathcal{D}_{\mathcal{T}} $ whenever $\exp_{\pi(v)}$ is nonsingular. Geometrically, the latter sort of singular point corresponds to a timelike geodesic loop $\gamma:[0,1]\rightarrow M$ which is {\it self-conjugate}, that is, $\gamma(1)$ is conjugate to $\gamma(0)=\gamma(1)$ along $\gamma$. Restricted to a smooth patch where such self-conjugate timelike geodesic loops are absent, $E_{\mathcal{T}}$ gives a local diffeomorphism between (a local smooth patch of) $\mathcal{L}$ and $\Delta_M$.
    \item[3)] The length functional on timelike geodesic loops corresponds on $\mathcal{L}$ to the map $v\in \mathcal{L}\mapsto |v| \in \mathbb{R}$. This is evidently continuous with respect to the induced topology on $\mathcal{L}$, so the length functional is continuous on the space of timelike geodesic loops.
    \item[4)] Let $\gamma_1,\gamma_2:[0,1]\rightarrow M$ be two timelike geodesic loops. If there exists a continuous curve $\beta:[0,1] \rightarrow \mathcal{L}$ connecting $\dot{\gamma}_i(0)$ ($i=1,2$), then 
    $$\sigma(s,t) := \exp(t\cdot \beta(s)) \in M$$
    is a $TG$-homotopy between $\gamma_1$ and $\gamma_2$. {\it Therefore, two timelike geodesic loops in the same path-connected component are $TG$-homotopic.} (Conversely, if there exists a $C^1$ $TG$-homotopy $\lambda:[0,1]^2 \rightarrow M$ between the $\gamma_i$, then there is a continuous curve in $\mathcal{L}$ connecting the $\dot{\gamma}_i(0)$.) 
    \item[5)] We emphasize that the space of timelike geodesic loops introduced here is {\em not} the same as (a subset of) the space of geodesics defined in \cite{beem&parker91}. To see why, let $\gamma:[a,b] \rightarrow M$ be a timelike geodesic segment. Suppose $\gamma$ self-intersects (at least) twice, say $\gamma(a) = \gamma(b) = \gamma(c)$ with $a<c<b$. Then the timelike geodesic loops $\eta_1,\eta_2,\eta_3:[0,1] \rightarrow M$ given, for $0\leq t\leq 1$, by
    \begin{eqnarray}
    \eta_1(t) &:=& \gamma((c-a).t + a), \nonumber \\
    \eta_2(t) &:=& \gamma( (b-c).t + c), \nonumber \\
    \eta_3(t) &:=& \gamma((b-a).t + a), \nonumber
    \end{eqnarray}
    which in general are all distinct, although they all give rise to a single inextendible geodesic up to affine reparametrization. 
    
\end{itemize}

\end{rem}

\medskip

\begin{prop}\label{closedness}
On the Lorentzian manifold $(M,g)$, let $(\eta_k:[0,1]\rightarrow M)_{k\in \mathbb{N}}$ be a sequence of timelike geodesic loops, all of which are in the same $TG$-homotopy class $\mathfrak{TG}(\gamma)$ of some timelike geodesic loop $\gamma$. Assume 
$$\dot{\eta}_k(0) \rightarrow v \quad \mbox{ in $TM$},$$
where $v \in \mathcal{D}_{\mathcal{T}}$ is a timelike vector, and assume %$E:\mathcal{D} \rightarrow M\times M$ is nonsingular at $v$. Then 
$$\eta: t\in [0,1] \mapsto \exp(t\cdot v)\in M$$ is a non-self-conjugate timelike geodesic loop. Then $\eta \in \mathfrak{TG}(\gamma)$. 
\end{prop}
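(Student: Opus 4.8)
The plan is to reduce the statement to the path-lifting criterion recorded in item 4) of Remark~\ref{callitup}: it suffices to produce, for some sufficiently large $k$, a continuous curve $\beta:[0,1]\to\mathcal{L}$ with $\beta(0)=\dot\eta_k(0)$ and $\beta(1)=v$. Indeed, once such a $\beta$ is available, item 4) furnishes a $TG$-homotopy $\sigma(s,t)=\exp(t\cdot\beta(s))$ between $\eta_k$ and $\eta$, and since $\eta_k\in\mathfrak{TG}(\gamma)$, transitivity of the $TG$-homotopy relation yields $\eta\in\mathfrak{TG}(\gamma)$. Note first that $v\in\mathcal{L}$: because $\eta$ is a loop we have $\pi(v)=\eta(0)=\eta(1)=\exp(v)$, so $E_{\mathcal{T}}(v)\in\Delta_M$.

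To build $\beta$, I would exploit the non-self-conjugacy of $\eta$. By item 2) of Remark~\ref{callitup}, non-self-conjugacy means exactly that $\exp_{\pi(v)}$ is nonsingular at $v$, hence that $E_{\mathcal{T}}$ is nonsingular at $v$; since $\dim\mathcal{D}_{\mathcal{T}}=\dim(M\times M)=2n$, the map $E_{\mathcal{T}}$ is a local diffeomorphism at $v$. I would then choose an open neighborhood $U\subset\mathcal{D}_{\mathcal{T}}$ of $v$ on which $E_{\mathcal{T}}$ restricts to a diffeomorphism onto an open set $W\subset M\times M$, shrinking $U$ so that $W$ is the domain of a chart adapted to the diagonal (i.e.\ a chart in which $\Delta_M\cap W$ corresponds to a linear slice of a ball). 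With this choice $\Delta_M\cap W$ is connected, and hence so is $\mathcal{L}\cap U=(E_{\mathcal{T}}|_U)^{-1}(\Delta_M\cap W)$, being diffeomorphic to it. Because $\dot\eta_k(0)\to v$ in $TM$ while every $\dot\eta_k(0)$ lies in $\mathcal{L}$, for all large $k$ we have $\dot\eta_k(0)\in\mathcal{L}\cap U$. Connecting $\dot\eta_k(0)$ to $v$ by a path inside the path-connected set $\mathcal{L}\cap U$ then produces the required $\beta$.

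The crux of the argument, and the point where the hypotheses are genuinely used, is the passage from a bare limit in $TM$ to a path inside $\mathcal{L}$: a priori $\mathcal{L}$ need not be a manifold, nor even locally path-connected, near a self-conjugate loop, so the convergence $\dot\eta_k(0)\to v$ would carry no homotopy-theoretic content. The non-self-conjugacy of $\eta$ is precisely what repairs this, endowing $\mathcal{L}$ with a local $n$-dimensional submanifold structure near $v$ through which $E_{\mathcal{T}}$ becomes a local diffeomorphism onto $\Delta_M$. The main technical care I would exercise is in the choice of $U$: one must ensure $\mathcal{L}\cap U$ is connected, which is why I adapt the target chart $W$ to the diagonal rather than taking an arbitrary neighborhood. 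This last point is routine once the local diffeomorphism is in hand, so the substance of the proof is entirely concentrated in the nonsingularity of $E_{\mathcal{T}}$ at $v$ guaranteed by the non-self-conjugacy assumption.
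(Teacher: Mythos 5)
Your proposal is correct and follows essentially the same route as the paper's proof: non-self-conjugacy gives nonsingularity of $E$ at $v$, the Inverse Function Theorem yields a local diffeomorphism onto a neighborhood of $(p,p)$ in which the diagonal slice is connected (the paper arranges this by taking the image to be $B\times B$ with $B$ connected, and pulls back a curve $(\alpha,\alpha)$ to get the path in $\mathcal{L}$), and exponentiating the resulting path produces the $TG$-homotopy exactly as in Remark \ref{callitup}(4). No gaps.
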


\noindent {\it Proof.} In order to see that $\eta \in \mathfrak{TG}(\gamma)$, it suffices to show that it is in the $TG$-homotopy class of any of the $\eta_k$. To that end, write $v_k := \dot{\eta}_k(0)\in T_{p_k}M$ with basepoint $p_k \in M$, for each $k$, and let $p$ be the basepoint of $v$. Then
$$E(v_k) \rightarrow E(v) \Rightarrow p_k \rightarrow p, $$
and since $\mathcal{L}$ is closed in $\mathcal{D}_{\mathcal{T}}$ (cf. Remark \ref{callitup}(1)), $v \in \mathcal{L}$, and thus $\eta$ is indeed a timelike geodesic loop.

Since $\eta$ is non-self-conjugate, $(d\exp_p)_{v}$ is nonsingular, and therefore $E$ is not singular at $v$. By the Inverse Mapping theorem there are connected open sets $A\ni v$ and $B\in p$ respectively in $\mathcal{D}_{\mathcal{T}}$ and $M$, respectively, such that $E\vert_A:A\rightarrow B\times B$ is a diffeomorphism. (In particular, all vectors of $A$ are timelike.) As $\lim v_k=v$, we can pick some $v_{k_0}\in A$ and, consequently, $E(v_{k_0})=(p_{k_0},p_{k_0})\in B\times B$. Now, let $\alpha:[0,1]\rightarrow M$ be any smooth curve from $p$ to $p_{k_0}$ in $B$. Thus, $\bar{\alpha}:=(E\vert_A)^{-1}\circ(\alpha\times\alpha):[0,1]\rightarrow TM$ is a smooth curve that connects $v$ to $v_{k_0}$ in $A$. Define the smooth map
$$
\begin{array}{cccc}
\lambda \ : & \! [0,1]\times [0,1] & \! \longrightarrow & \! M \\
& \! (s,t) & \! \longmapsto & \! \lambda(s,t)=\exp\left(t\cdot \bar{\alpha}(s)\right)
\end{array}.
$$
We have that
\begin{itemize}
\item $\lambda(0,t)=\exp\left(t\cdot \bar{\alpha}(0)\right)=\exp_p(tv)=\eta(t)$, for all $t\in[0,1]$;
\item $\lambda(1,t)=\exp\left(t\cdot \bar{\alpha}(1)\right)=\exp_{p_{k_0}}(tv_{k_0})=\eta_{k_0}(t)$, for all $t\in[0,1]$;
\item The longitudinal curves $\lambda_s$ are timelike (by the choice of the open set $A$ above), and besides,
$$\lambda_s(0)=\exp\left(0\cdot \bar{\alpha}(s)\right)=\alpha(s)=\exp\left(1\cdot \bar{\alpha}(s)\right)=\lambda_s(1).$$
I.e., the longitudinal curves $\lambda_s$ are timelike geodesic loops.
\end{itemize}
Therefore, $\lambda$ is a $TG$-homotopy deforming $\eta$ onto $\eta_{k_0}$ via timelike geodesic loops and, hence, $\eta\in \mathfrak{TG}(\eta_{k_0})=\mathfrak{TG}(\gamma)$ as desired. (Compare with Remark \ref{callitup}(4).)

\qcd

%\textcolor{red}{Although the geometric global description of the timelike loop space given above is geometrically compelling, it is not particularly useful for concrete calculations. We proceed now to the technical details needed to extract definite results. }

The major result of this section is the announced Hill-Climb Lemma. It entails that timelike geodesic loops can have their lengths either enlarged or shortened within the same $TG$-homotopy class {\it unless} they are {\it either} self-conjugate, {\it or} closed timelike geodesics. 

%A timelike geodesic loop $\gamma:[a,b]\rightarrow M$ is {\it self-conjugate} if $\gamma(b)$ is conjugate to $\gamma(a)$ along $\gamma$. 
\medskip

\begin{thm}\label{stretch}
%Any locally maximizing, non-self-conjugate timelike geodesic loop on a Lorentzian manifold is a closed timelike geodesic.
Let $\gamma:[0,l]\rightarrow M$ be a non-self-conjugate timelike geodesic loop on the Lorentzian manifold $(M,g)$. Then, either $\gamma$ is a closed timelike geodesic, or else there exists a smooth $TG$-homotopy $\sigma:[-\delta, \delta] \times [0,l] \rightarrow M$ such that $\sigma_0 \equiv \gamma$, and
%and ${\rm length}(\sigma _{s})$ is strictly increasing on $[-\delta,\delta]$. In particular, in this last case,
$${\rm length}(\sigma _{s}) < {\rm length}(\gamma)<{\rm length}(\sigma _{s'}) , \quad\hbox{whenever $-\delta\leq s<0<s'\leq\delta$}.$$ 
%\forall s \in (0,\delta].$$
\end{thm}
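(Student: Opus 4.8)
The plan is to reduce the statement to a first-variation computation carried out directly on the space $\mathcal{L}$ of timelike geodesic loops, exploiting the local manifold structure of $\mathcal{L}$ furnished by the non-self-conjugacy hypothesis. Write $p := \gamma(0) = \gamma(l)$ and $v_0 := \dot{\gamma}(0)$. Since $\gamma$ is non-self-conjugate, $\exp_p$ is nonsingular at $v_0$, hence so is $E_{\mathcal{T}}$; by the inverse function theorem there is a neighborhood $A \ni v_0$ on which $E_{\mathcal{T}}$ restricts to a diffeomorphism $E_{\mathcal{T}}|_A : A \to B\times B$ (with $A$ consisting of timelike vectors), and by Remark \ref{callitup}(2) the corresponding patch of $\mathcal{L}$ is a smooth $n$-manifold on which the basepoint projection $\pi|_{\mathcal{L}}$ is a local diffeomorphism onto a neighborhood of $p$. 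Thus, near $v_0$, timelike geodesic loops are parametrized freely by their basepoint via $q \mapsto v(q) := (E_{\mathcal{T}}|_A)^{-1}(q,q)$. By Remark \ref{callitup}(3)(4) it then suffices to produce a smooth curve $s \mapsto \beta(s)$ in this patch with $\beta(0)=v_0$ along which the functional $v \mapsto |v|$, to which $\mathrm{length}$ reduces, is strictly monotone; setting $\sigma(s,t) := \exp(t\cdot\beta(s))$ — up to the harmless affine rescaling of the domain from $[0,1]$ to $[0,l]$ — yields the desired smooth $TG$-homotopy.

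Next I would compute the differential of this length functional along $\mathcal{L}$ at $v_0$. Given any $w \in T_pM$, choose a basepoint curve $q(s)$ with $q(0)=p$ and $\dot q(0)=w$, set $\beta(s) := v(q(s))$, and let $V$ be the variation field along $\gamma$ of the homotopy $(s,t)\mapsto\exp(t\cdot\beta(s))$. The crucial structural observation is that, because every $\beta(s)$ lies in $\mathcal{L}$ (so the loop closes up, $\exp(\beta(s)) = \pi(\beta(s)) = q(s)$), the field $V$ obeys the \emph{tied} boundary condition $V(0)=V(1)=w$: both endpoints move, but always to the same point. Substituting this into the first-variation-of-arclength formula for the geodesic $\gamma$ — where the interior term drops out since $D_t\dot{\gamma}\equiv 0$ — leaves only the boundary contribution,
\[ \frac{d}{ds}\Big|_{s=0}\,\mathrm{length}(\sigma_s) = \frac{1}{|v_0|}\,g\big(w,\,v_0 - v_1\big), \]
where $v_1$ denotes the final velocity of $\gamma$ (rescaled to the $[0,1]$ parametrization), which also lies in $T_pM$.

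The dichotomy is now read off from nondegeneracy of $g$. If $v_0 = v_1$, i.e. $\dot{\gamma}(0)=\dot{\gamma}(l)$, then $\gamma$ is a closed (indeed periodic) timelike geodesic and the first alternative holds. Otherwise $v_0 - v_1 \neq 0$, so there exists $w \in T_pM$ with $g(w, v_0 - v_1) > 0$ (replacing $w$ by $-w$ if necessary); the associated curve $\beta$ has strictly positive length-derivative at $s=0$, whence, shrinking to $|s|\leq\delta$ so that the derivative retains its sign by continuity, $\mathrm{length}(\sigma_s)$ is strictly increasing on $[-\delta,\delta]$ — precisely the stated inequality. Since $v_0$ is timelike and $A$ consists of timelike vectors, each $\beta(s)$ is timelike for small $s$, so all $\sigma_s$ are genuine timelike geodesic loops and $\sigma$ is a $TG$-homotopy.

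I expect the main obstacle to be the correct treatment of the moving, tied endpoints in the first-variation computation. In contrast to the usual fixed-endpoint setting, $V$ does not vanish on the boundary but takes the common value $w$ at both ends, and it is exactly this boundary term $g(w,v_0-v_1)$ that encodes all the geometric content. Verifying carefully that $V(0)=V(1)=w$ (using $\exp\circ\beta = \pi\circ\beta$ on $\mathcal{L}$) and that $w$ may be taken to be an \emph{arbitrary} vector of $T_pM$ (using that $\pi|_{\mathcal{L}}$ is a local diffeomorphism at $v_0$) is where the non-self-conjugacy hypothesis does its real work, and is the step most prone to error.
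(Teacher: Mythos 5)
Your argument is correct, and it reaches the conclusion by a genuinely different route from the paper's. The paper does not set up a first variation with moving, tied endpoints; instead it works in geodesic polar coordinates centred at basepoints $q_{\pm\delta}=\gamma(\pm\delta)$ obtained by sliding along the loop itself, writes the radial coordinate $r_q(t)$ of $\gamma$ as seen from $q$, and uses the Gauss lemma to obtain $|\dot r_q(t)|=\sqrt{1+|\partial f_q/\partial t|^2}\geq 1$; integrating this over the short arc between $p$ and $q_{\pm\delta}$ compares ${\rm length}(\beta_{\pm\delta})=|u_{\pm\delta}|$ with $l$ directly, the dichotomy being governed by the sign of $\dot r_p(0)$, and the degenerate subcase $\eta_{\pm\delta}=0$ forcing $\partial f_q/\partial t\equiv 0$ and hence $\gamma(t)=\gamma(t+l)$, i.e.\ the closed-geodesic alternative. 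Your computation is essentially the linearization of this: the paper's variations correspond to the particular choice $w=\pm\dot\gamma(0)$ in your boundary term, whereas you allow $w$ to range over all of $T_pM$ and invoke nondegeneracy of $g$ on $v_0-v_1$. What you gain is brevity and a transparent characterization of closed timelike geodesics as exactly the critical points of the length functional on the local manifold patch of $\mathcal{L}$; the one point to state carefully is the timelike first-variation formula, ${\rm length}'(0)=-\tfrac{1}{|\dot\gamma|}\left[g(V,\dot\gamma)\right]_0^1$, which with $V(0)=V(1)=w$ (a consequence of $\exp(\beta(s))=\pi(\beta(s))=q(s)$, as you note) does give $g(w,v_0-v_1)/|v_0|$. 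What the paper's integral estimate buys is quantitative, non-infinitesimal control --- in the stretching case it shows ${\rm length}(\beta_{-\delta})\geq l+2\delta$ for every acceptable $\delta$, not merely a positive derivative at $s=0$ --- and it compares lengths of explicit loops rather than differentiating the length functional on $\mathcal{L}$. Both proofs use the non-self-conjugacy hypothesis in the same place (the inverse function theorem for $E$ at $v_0$, giving the smooth parametrization of nearby timelike geodesic loops by their basepoints) and both deliver the two-sided monotone $TG$-homotopy required by the statement.
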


\noindent {\it Proof.}  
%Let $\gamma:[0,l]\rightarrow M$ be a locally maximizing, non-self-conjugate timelike geodesic loop on the Lorentz manifold $(M,g)$. In particular, 
We may assume without loss of generality that $\gamma:[0,l]\rightarrow M$ is a unit timelike geodesic segment with $\gamma(0)=\gamma(l)=:p$, and that $\gamma(0)(=\gamma(l))$ is not conjugate to itself along $\gamma$. Recall that we denote the exponential map on $(M,g)$ by $\exp :\mathcal{D}\subset TM\rightarrow M$, and for any $q\in M$, we write $\mathcal{D}_q := \mathcal{D}\cap (T_qM)$ and $\exp _q:= \exp |_{\mathcal{D}_q}$. Thus, $v_0:= l\, \dot{\gamma}(0) \in \mathcal{D}_p$ and  $\exp_p(v_0)=\exp_{\gamma(0)}(l\dot{\gamma}(0))=\gamma(l)=p$. 

The condition that $\gamma$ is not self-conjugate implies that $(d\exp_p)_{v_0}:T_{v_0}(T_p M)\rightarrow T_p M$ is non-singular. Let $E:\mathcal{D} \subset TM \rightarrow M \times M$ given by $E(v):=(\pi(v),\exp(v))$ for $v \in \mathcal{D}$, where $\pi$ is the natural projection of $TM$ onto $M$. We know that $E$ is nonsingular at $v_0 \in \mathcal{D} $ since $\exp_p$ is nonsingular, too, and also that
\begin{equation*}
E(v_0)=(\pi(v_0),\exp(v_0))=(p,p).
\end{equation*} 
By the Inverse Function Theorem, there exist connected open sets $U\ni  v_0$ and $V \ni p$ of $TM$ and $M$, respectively, for which $E|_{U} : U \subset \mathcal{D} \rightarrow V \times V \subset M \times M$ is a diffeomorphism.  Shrinking $U$ and $V$ if necessary we can (and will) without loss of generality assume that every vector $v \in U$ is timelike\footnote{Note that $v_0 \in TM$ is itself timelike, which is an open condition.} and $V$ is a normal convex neigborhood of $p$. We write $U_q := U\cap \mathcal{D}_q$ for each $q\in V$, and one easily checks that $\exp_q|_{U_q}:U_q \rightarrow V$ is a diffeomorphism.

Fix $0< \vartheta< l$ such that $\gamma[-\vartheta,\vartheta] \subset V$\footnote{Of course, we have here implicitly extended $\gamma$ slightly to the left of $0$, but we shall retain its name.} and let $\alpha:=\gamma|_{(-\vartheta,\vartheta)}$. Since $V$ is convex, we have $\alpha(t) \neq \alpha(s)$ for $s\neq t$ in $(-\vartheta,\vartheta)$. Now, for each $q\in V$ define the smooth curve
\[
\overline{\alpha}_q:=(\exp_q\mid_{U_q})^{-1}\circ\alpha :(-\vartheta,\vartheta)\rightarrow U_q\subset T_qM.
\]
 (Note that we always have $r_q(t):= |\overline{\alpha}_q(t)|>0$ for any $q\in V$ and any $t\in (-\vartheta,\vartheta)$ by our choice of $U$.) In addition, let  
 \[
 w_q(t):= \frac{\overline{\alpha}_q(t)}{r_q(t)}=\frac{\overline{\alpha}_q(t)}{|\overline{\alpha}_q(t)|} \quad \forall t \in (-\vartheta,\vartheta),
 \]
 and
 \[
 \mathcal{O}_q=\{(r,t) \in (0,+\infty)\times (-\vartheta,\vartheta) \, : \, r\cdot w_q(t)  \in U_q\}.
 \]
 We then define  
 \begin{equation}\label{mm}
f_q: (r,t) \in \mathcal{O}_q \mapsto \exp_q(r\, w_q(t)) \in M.
\end{equation}
In particular,
\begin{equation}\label{eq1}
\dot{\alpha}(t)=\frac{d}{dt}\exp_q(\overline{\alpha}_q(t))=\frac{d}{dt}f_q(r_q(t),t)=\frac{\partial f_q}{\partial r}\dot{r}_q(t)+\frac{\partial f_q}{\partial t}.
\end{equation}
By Gauss' Lemma, $g(\partial f_q/\partial r,\partial f_q/\partial t)=0$. Moreover, $g(\partial f_q/\partial r,\partial f_q/\partial r)=-1$. In a Lorentzian manifold these relations imply that $g(\partial f_q/\partial t,\partial f_q/\partial t)\geq 0$ (i.e., $\partial f_q/\partial t$ is spacelike).
%and, since $\dot{\alpha}(0)\neq\dot{\gamma}(l)$, necessarily $|\partial f/\partial t|>\eta_0>0$ for all $t$ and all $\epsilon$ small enough.
Hence,
$$-1=g(\dot{\alpha}(t),\dot{\alpha}(t))=g(\partial f_q/\partial r,\partial f_q/\partial r)\cdot\dot{r}_q(t)^2+g(\partial f_q/\partial t,\partial f_q/\partial t)=-\dot{r}_q(t)^2+\left|\frac{\partial f_q}{\partial t}\right|^2,$$
or 
\begin{equation}\label{key1}
|\dot{r}_q(t)|= \sqrt{1+ \left|\frac{\partial f_q}{\partial t}\right|^2}= 1 + \frac{\left|\frac{\partial f_q}{\partial t}\right|^2}{1+ \sqrt{1+ \left|\frac{\partial f_q}{\partial t}\right|^2}}.
\end{equation}
In particular, $|\dot{r}_q(t)|\geq 1$, and hence $\dot{r}_q(t)$ is either always positive or always negative for every $(t,q) \in (-\vartheta,\vartheta)\times V$. 

Now, observe that 
\begin{equation}\label{comp}
\overline{\alpha}_q(t) = (E|_{U})^{-1}(q,\alpha(t)), \quad \forall (t,q) \in (-\vartheta,\vartheta)\times V,
\end{equation}
and hence $r_q(t)=|\overline{\alpha}_q(t)|$ varies smoothly on $(-\vartheta,\vartheta)\times V$; by continuity there exist an open set $V_0\ni p$ contained in $V$ and a number $0<\vartheta_0 < \vartheta$ such that $\dot{r}_q(t)$ has the same sign as $\dot{r}_p(0)$, whenever $(t,q)\in (-\vartheta_0, \vartheta_0)\times V_0$.  

For the rest of this proof we shall refer to a positive number $\delta \in (0,\vartheta_0)$ as {\it acceptable} if it is small enough that $v_{\pm\delta}:=(l\mp\delta)\dot{\gamma}(\pm \delta) \in U$ (note that $\lim_{\delta \rightarrow 0} v_{\pm\delta}=v_0\in U$), and $\gamma[-\delta,\delta] \subset V_0$. Fix any such an acceptable $\delta>0$, let $q_{\pm\delta}:=\gamma(\pm\delta)\neq p$. We compute 
\begin{equation}\label{g}
\exp_{q_{\pm\delta}}(v_{\pm\delta}) = \exp_{\gamma(\pm\delta)}((l\mp\delta)\dot{\gamma}(\pm\delta)) = \gamma(\pm\delta + (l\mp\delta)) = \gamma(l) = p,
\end{equation}
From (\ref{g}) we deduce that
\begin{equation}\label{g1}
\overline{\alpha}_{q_{\pm\delta}}(0)=(\exp_{q_{\pm\delta}}\mid_{U_{q_{\pm\delta}}})^{-1}(\alpha(0))=(\exp_{q_{\pm\delta}}\mid_{U_{q_{\pm\delta}}})^{-1}(p)=v_{\pm\delta}\in U_{q_{\pm\delta}}\subset \mathcal{D}_{q_{\pm\delta}}.    
\end{equation}
So, if we define
\begin{equation}\label{g2}
u_{\pm\delta}:=\overline{\alpha}_{q_{\pm\delta}}(\pm\delta)=(\exp_{q_{\pm\delta}}\mid_{U_{q_{\pm\delta}}})^{-1}(\alpha(\pm\delta))\in U_{q_{\pm\delta}}\subset \mathcal{D}_{q_{\pm\delta}},
\end{equation}
the vectors $u_{\pm\delta}, v_{\pm\delta} \:(\neq 0)$ are distinct (recall that $\alpha(0)=p\neq q_{\pm\delta}=\alpha(\pm\delta)$ and $\exp_{q_{\pm\delta}}\mid_{U_{q_{\pm\delta}}}$ is a diffeomorphism).

On the one hand, by (\ref{comp}) we have
\[
u_{\pm\delta} \stackrel{\delta}{\longrightarrow} \overline{\alpha}_p(0),
\]
and 
\[
\exp _p(v_0) = p= \alpha(0)= \exp _p(\overline{\alpha}_p(0)) \Rightarrow \overline{\alpha}_p(0) = l \, \dot{\gamma}(0),
\]
whence we conclude that 
\begin{equation}\label{d1}
u_{\pm\delta}/l \stackrel{\delta}{\longrightarrow} \dot{\gamma}(0).
\end{equation}
Now, if we define the geodesics
\[
\beta_{\pm \delta}: t\in [0,l] \mapsto \exp _{q_{\pm \delta}}(t\cdot (u_{\pm \delta}/l)) \in M
\]
we see easily that $\beta_{\pm \delta}(0)= \beta_{\pm \delta }(l)= q_{\pm \delta}$, i.e., these are geodesic loops. Moreover, $\dot{\beta}_{\pm \delta}(0) = u_{\pm \delta}/l$. By (\ref{d1}), the initial velocity of these geodesic loops can made as close to $\dot{\gamma}(0)$ as desired by taking $\delta \rightarrow 0$, and 
\begin{equation}\label{lengthpm}
{\rm length}(\beta _{\pm \delta}) = |u_{\pm \delta}|.
\end{equation}

There are now two cases to consider. \vspace{0.2cm}\\
\uline{\textsc{$1^{\uline{\textrm{st}}}$ Case:}}  $\dot{r}_p(0) <0$. \vspace{0.2cm}

Our previous choices then imply that $\dot{r}_{q}(t) <0$ for any $q\in V_0$ and $t\in (-\vartheta_0,\vartheta_0)$. In particular, if we pick an acceptable $\delta >0$ we can integrate Eq. (\ref{key1}) on $[-\delta,0]$ with $q=q_{-\delta}$ to obtain
\begin{equation}\label{key2}
\int_{-\delta} ^0|\dot{r}_{q_{-\delta}}(t)|dt = -(r_{q_{-\delta}}(0) - r_{q_{-\delta}}(-\delta)) = \delta + \eta _{-\delta} \Rightarrow |u_{-\delta}|-|v_{-\delta}| = \delta + \eta_{-\delta},
\end{equation}
where we have defined
\begin{equation}\label{key3}
\eta_{-\delta} := \int_{-\delta}^0\frac{\left|\frac{\partial f_{q_{-\delta}}}{\partial t}\right|^2}{1+ \sqrt{1+ \left|\frac{\partial f_{q_{-\delta}}}{\partial t}\right|^2}} \, dt\geq 0.
\end{equation}
Therefore, using the definition of $v_{-\delta}$ and (the discussion around) (\ref{lengthpm}) in Eq.(\ref{key2}) yields
\begin{equation}\label{key4}
{\rm length}(\beta _{- \delta}) = l + 2\delta +\eta _{-\delta} > l \equiv {\rm length}(\gamma),
\end{equation}
%which contradicts the local maximality of $\gamma$ for small $\delta$.  
Thus, if we define 
$$\sigma: (s,t) \in [0,\delta]\times [0,l] \mapsto \beta_{-s}(t) \in M$$
we have a $TG$-homotopy of $\gamma$ for which each longitudinal curve (with $s>0$) has strictly larger Lorentzian length than $\gamma$.

On the other hand, we can also integrate Eq. (\ref{key1}) on $[0,\delta]$ with $q=q_{+\delta}$ to obtain
\begin{equation}\label{key2'}
	\int^{\delta} _0|\dot{r}_{q_{+\delta}}(t)|dt = -(r_{q_{+\delta}}(\delta) - r_{q_{+\delta}}(0)) = \delta + \eta _{+\delta} \Rightarrow -|u_{+\delta}|+|v_{+\delta}| = \delta + \eta_{+\delta},
\end{equation}
where
\begin{equation}\label{key3'}
	\eta_{+\delta} := \int^{\delta}_0\frac{\left|\frac{\partial f_{q_{+\delta}}}{\partial t}\right|^2}{1+ \sqrt{1+ \left|\frac{\partial f_{q_{+\delta}}}{\partial t}\right|^2}} \, dt\geq 0.
\end{equation}
Thus, using the definition of $v_{+\delta}$ and (the discussion around) (\ref{lengthpm}) in Eq.(\ref{key2'}) yields
\begin{equation}\label{key4'}
	{\rm length}(\beta _{+ \delta}) = l - 2\delta -\eta _{+\delta} < l \equiv {\rm length}(\gamma).
\end{equation}
%which contradicts the local maximality of $\gamma$ for small $\delta$.  
Therefore, if we now define 
$$\sigma: (s,t) \in [0,\delta]\times [0,l] \mapsto \beta_{s}(t) \in M,$$
we have a $TG$-homotopy of $\gamma$ whose longitudinal curves with $s>0$ have strictly {\it shorter} Lorentzian length than that of $\gamma$.
\vspace{0.2cm}\\
\uline{\textsc{$2^{\uline{\textrm{nd}}}$ Case:}} $\dot{r}_p(0) >0$. \vspace{0.2cm}

In this case, $\dot{r}_{q}(t) >0$ for any $q\in V_0$ and $t\in (-\vartheta_0,\vartheta_0)$. Again we pick any acceptable $\delta >0$, but we now integrate Eq. (\ref{key1}) on $[0,\delta]$ with $q=q_{+\delta}$:
\begin{equation}\label{key5}
\int ^{\delta} _0|\dot{r}_{q_{+\delta}}(t)|dt = (r_{q_{+\delta}}(\delta) - r_{q_{+\delta}}(0)) = \delta + \eta _{+\delta} \Rightarrow |u_{+\delta}|-|v_{+\delta}| = \delta + \eta_{+\delta},
\end{equation}
where we have now defined
\begin{equation}\label{key6}
\eta_{+\delta} := \int^{+\delta}_0\frac{\left|\frac{\partial f_{q_{+\delta}}}{\partial t}\right|^2}{1+ \sqrt{1+ \left|\frac{\partial f_{q_{+\delta}}}{\partial t}\right|^2}} \, dt \geq 0.
\end{equation}
However, this time around,  using the definition of $v_{+\delta}$ and (the discussion around) (\ref{lengthpm}) in Eq.(\ref{key5}), 
%the number $\theta$ cancels out on both sides, and 
we obtain
\begin{equation}\label{key7}
{\rm length}(\beta _{+ \delta}) = l  +\eta _{+\delta} \geq l \equiv {\rm length}(\gamma).
\end{equation}
This inequality is not {a priori} strict, but there are two possibilities: {\it either} $(i)$ there exists some acceptable $\delta_0>0$ such that $\eta _{+\delta}>0$ for {\it every} $0<\delta \leq \delta_0$, and if so we are back to strict inequalities in (\ref{key7}) on this range; then, arguing as in the first case, a $TG$-homotopy of $\gamma$ can be found for which {\it every } longitudinal curves with $s>0$ strictly increase length with respect to $\gamma$; or {\it else} $(ii)$ for {\it some} acceptable $\delta_0>0$, we ought to have $\eta_{+\delta_0} \equiv 0$. In that case, since the integrand in (\ref{key6}) is nonnegative one gets
%hich gives a $TG$-homotopy of $\gamma$ increasing its length for small $\delta$ {\it only} if we know that $\eta _{+\delta}>0$ for at least some sequence of $\delta$'s going to zero. If this is not the case, we now must conclude that for some acceptable $\delta_0>0$, we ought to have $\eta_{\delta} \equiv 0$ for $0<\delta \leq \delta_0$. In that case, 

\begin{equation}\label{key8}
\frac{\partial f_{q_{+\delta_{0}}}}{\partial t}\equiv 0, \quad \hbox{on $[0,\delta_0]$},
\end{equation}
which plugged back into (\ref{key1}) gives (recall that $\dot{r}_{q_{+\delta_0}}>0$) 
\begin{equation}\label{key9}
r_{q_{+\delta_0}}(t) = t+ (l-\delta_0) \quad \forall t\in [0,\delta_0].
\end{equation}
In addition, when (\ref{key8}) is used in (\ref{mm}), it allows us to conclude that 
\begin{equation}\label{key10}
\dot{w}_{+\delta_0} \equiv 0 \Rightarrow w_{+\delta_0}(t) = w_{+\delta_0}(0) = \frac{v_{+\delta_0}}{|v_{+\delta_0}|} \equiv \dot{\gamma}(\delta_0) \quad \forall t \in [0,\delta_0].
\end{equation}
Therefore, (\ref{key9}) and (\ref{key10}) yield
\begin{equation}\label{key11}
\overline{\alpha}_{\gamma(\delta_0)}(t) = (t+ l - \delta_0)\cdot \dot{\gamma}(\delta_0) \Rightarrow \alpha(t) = \exp _{\gamma(\delta_0)}((t+l-\delta_0)\cdot \dot{\gamma}(\delta_0)) \Rightarrow \gamma(t) = \gamma(t+l)  
\end{equation}
on $[0,\delta_0]$, and in particular 
\[
\dot{\gamma}(0)=\dot{\gamma}(l),
\]
which in turn means that $\gamma$ is a closed timelike geodesic. 

Finally, for the last subcase, we integrate 
Eq. (\ref{key1}) on $[-\delta,0]$ with $q=q_{-\delta}$:
\begin{equation}\label{key5'}
	\int _{-\delta} ^0|\dot{r}_{q_{-\delta}}(t)|dt = (r_{q_{-\delta}}(0) - r_{q_{-\delta}}(-\delta)) = \delta + \eta _{-\delta} \Rightarrow |v_{-\delta}|-|u_{-\delta}| = \delta + \eta_{-\delta},
\end{equation}
where 
\begin{equation}\label{key6'}
	\eta_{-\delta} := \int_{-\delta}^0\frac{\left|\frac{\partial f_{q_{-\delta}}}{\partial t}\right|^2}{1+ \sqrt{1+ \left|\frac{\partial f_{q_{-\delta}}}{\partial t}\right|^2}} \, dt\geq 0.
\end{equation}
By using the definition of $v_{-\delta}$ and (the discussion around) (\ref{lengthpm}) in Eq.(\ref{key5'}) the number $\delta$ now cancels out on both sides, and we obtain
\begin{equation}\label{key7'}
	{\rm length}(\beta _{-\delta}) = l  -\eta _{-\delta} \leq l \equiv {\rm length}(\gamma).
\end{equation}
Again, this yields a $TG$-homotopy of $\gamma$ {\it decreasing} its length for small $\delta$, but {\it only} if we know that $\eta _{-\delta}>0$ on an interval of acceptable $\delta$'s. If  not, $\eta_{-\delta_0}=0$ for at least some acceptable $\delta_0>0$, in which case an argument entirely analogous to that of the last steps in the previous subcase shows that $\gamma$ is again a closed timelike geodesic.
 
\qcd

We say that a timelike geodesic loop $\gamma:[0,1]\rightarrow M$ is \textit{locally maximizing} (resp. \textit{locally minimizing}) if $\comp(\zeta) \leq \comp(\gamma)$ (resp. $\comp(\zeta) \geq \comp(\gamma)$) for any timelike geodesic loop $\zeta:[0,1]\rightarrow M$ with initial conditions $(\zeta(0),\dot{\zeta}(0))\in TM$ close\footnote{Here, $TM$ is itself implicitly endowed with a suitable Riemannian metric, say, the Sasaki metric associated with some auxiliary Riemannian metric $h$ on $M$.} enough to $(\gamma(0),\dot{\gamma}(0))$. We say that $\gamma$ is \textit{locally extremal} if it is either locally maximizing or locally minimizing. We then have an immediate consequence of this definition in view of the Hill-Climb Lemma:

\begin{cor}\label{cor1hillclimb}
Any locally extremal, non-self-conjugate timelike geodesic loop in the Lorentzian manifold $(M,g)$ is a closed timelike geodesic. 
\end{cor}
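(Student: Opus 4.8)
The plan is to derive Corollary \ref{cor1hillclimb} directly from the Hill-Climb Lemma (Theorem \ref{stretch}) by a contrapositive argument. Suppose $\gamma:[0,1]\rightarrow M$ is a non-self-conjugate, locally extremal timelike geodesic loop, and assume for contradiction that $\gamma$ is \emph{not} a closed timelike geodesic. Since $\gamma$ is non-self-conjugate, Theorem \ref{stretch} applies and yields a smooth $TG$-homotopy $\sigma:[-\delta,\delta]\times[0,l]\rightarrow M$ with $\sigma_0\equiv\gamma$ and strict inequalities
\[
{\rm length}(\sigma_s) < {\rm length}(\gamma) < {\rm length}(\sigma_{s'}), \qquad -\delta\leq s<0<s'\leq\delta.
\]
In particular there exist longitudinal timelike geodesic loops of length both strictly greater than and strictly less than that of $\gamma$.

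The key step is to match this conclusion against the definition of local extremality, which is phrased in terms of the closeness of initial conditions $(\zeta(0),\dot\zeta(0))$ in $TM$ (with its Sasaki metric). So I would verify that the family $s\mapsto\sigma_s$ produced by Theorem \ref{stretch} varies continuously in these initial data: each $\sigma_s$ is a timelike geodesic loop, and because $\sigma$ is smooth the assignment $s\mapsto(\sigma_s(0),\dot\sigma_s(0))\in TM$ is continuous, with value at $s=0$ equal to $(\gamma(0),\dot\gamma(0))$. Hence for $s'$ sufficiently close to $0$ (and likewise $s<0$), the loop $\sigma_{s'}$ has initial conditions arbitrarily close to those of $\gamma$. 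Concretely, reading off the proof of Theorem \ref{stretch}, the longitudinal curves are the loops $\beta_{\pm s}$ whose basepoints $q_{\pm s}=\gamma(\pm s)\to p$ and whose initial velocities $u_{\pm s}/l\to\dot\gamma(0)$ by Eq. \eqref{d1}, so continuity of the initial data is immediate.

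Now I would close the argument by a dichotomy on which type of extremum $\gamma$ is. If $\gamma$ is locally maximizing, then there exists $s'\in(0,\delta]$ with $(\sigma_{s'}(0),\dot\sigma_{s'}(0))$ close enough to $(\gamma(0),\dot\gamma(0))$ that local maximality forces ${\rm length}(\sigma_{s'})\leq{\rm length}(\gamma)$; this contradicts the strict inequality ${\rm length}(\gamma)<{\rm length}(\sigma_{s'})$. Symmetrically, if $\gamma$ is locally minimizing, choosing $s\in[-\delta,0)$ with initial data close to $\gamma$'s gives ${\rm length}(\sigma_s)\geq{\rm length}(\gamma)$, contradicting ${\rm length}(\sigma_s)<{\rm length}(\gamma)$. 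In either case we reach a contradiction, so $\gamma$ must in fact be a closed timelike geodesic, as claimed.

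The only genuinely substantive point — and the step I would be most careful about — is the compatibility between the two notions of ``nearby loop.'' Theorem \ref{stretch} guarantees nearby loops along a specific one-parameter family, whereas local extremality quantifies over \emph{all} timelike geodesic loops with nearby initial conditions in $TM$. The argument goes through because the definition of local extremality is a statement about every nearby loop, so in particular it constrains the ones supplied by the homotopy; one just needs the continuity $s\mapsto(\sigma_s(0),\dot\sigma_s(0))$ noted above to ensure those loops do lie in the prescribed neighborhood. Thus no genuine obstacle arises, and the corollary follows as a short formal consequence of the Hill-Climb Lemma.
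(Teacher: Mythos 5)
Your argument is correct and is precisely the reasoning the paper has in mind: the authors state the corollary as an immediate consequence of Theorem \ref{stretch} and give no written proof, and your contrapositive argument — together with the (worthwhile) check that the loops $\sigma_s$ have initial conditions $(\sigma_s(0),\dot\sigma_s(0))$ converging to $(\gamma(0),\dot\gamma(0))$ in $TM$, so that they fall under the quantifier in the definition of local extremality — fills in exactly the intended details.
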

\qcd

\section{A novel existence result for closed timelike geodesics}\label{sec2}

We turn now to the main geometric consequences of the analysis in the previous section. The main results here are Proposition \ref{newmainthm'} and Theorem \ref{newmainthm} below, which establish new natural conditions for the existence of a closed timelke geodesic in a Lorentz manifold. Roughly speaking, the {\it precompactness} (in a suitable sense) of the $TG$-homotopy class of a timelike geodesic loop is the key assumption leading to the existence a locally extremal timelike geodesic loop, Corollary \ref{cor1hillclimb} doing the rest. 

\begin{prop}\label{newmainthm'}
	Suppose the Lorentzian manifold $(M,g)$ contains some timelike geodesic loop $\gamma$ such that (the set of initial velocities of the members of) $\mathfrak{TG}(\gamma)$ is precompact in $\mathcal{D}\subset TM$. 
	\newline \indent Then, $L_{\mathfrak{TG}}(\gamma)<\infty$ and there exists a timelike geodesic loop $\eta$ with length $L_{\mathfrak{TG}}(\gamma)$ such that either $\eta \in \mathfrak{T}(\gamma)$ and it is self-conjugate, or $\eta \in \mathfrak{TG}(\gamma)$ and it is a closed timelike geodesic therein. 
	\newline \indent If in addition $l_{\mathfrak{TG}}(\gamma)>0$, then there exists a timelike geodesic loop $\eta'$ with length $l_{\mathfrak{TG}}(\gamma)$ such that either $\eta' \in \mathfrak{T}(\gamma)$ and it is self-conjugate, or $\eta' \in \mathfrak{TG}(\gamma)$ and it is a closed timelike geodesic therein.  
\end{prop}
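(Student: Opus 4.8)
The plan is to realize both $L_{\mathfrak{TG}}(\gamma)$ and $l_{\mathfrak{TG}}(\gamma)$ as lengths of limit loops extracted from the precompact class, and then to feed these limits into the machinery of Proposition \ref{closedness} and Corollary \ref{cor1hillclimb}. Write $\mathcal{L}_\gamma\subset\mathcal{L}$ for the set of initial velocities of the members of $\mathfrak{TG}(\gamma)$. Since the length functional is the continuous map $v\mapsto |v|$ on $\mathcal{L}$ (Remark \ref{callitup}(3)) and $\mathcal{L}_\gamma$ is precompact, it is bounded, so $L_{\mathfrak{TG}}(\gamma)<\infty$; moreover $L_{\mathfrak{TG}}(\gamma)\geq \comp(\gamma)>0$ automatically. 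First I would pick $v_k\in\mathcal{L}_\gamma$ with $|v_k|\to L_{\mathfrak{TG}}(\gamma)$ and, using precompactness in $\mathcal{D}$, pass to a subsequence with $v_k\to v_\infty\in\overline{\mathcal{L}_\gamma}\subset\mathcal{D}$. Because $g(v_k,v_k)\to -L_{\mathfrak{TG}}(\gamma)^2<0$, the limit $v_\infty$ is genuinely \emph{timelike}, hence $v_\infty\in\mathcal{D}_{\mathcal{T}}$; as $\mathcal{L}$ is closed in $\mathcal{D}_{\mathcal{T}}$ (Remark \ref{callitup}(1)) we obtain $v_\infty\in\mathcal{L}$, i.e. $\eta:t\mapsto\exp(t v_\infty)$ is a timelike geodesic loop with $\comp(\eta)=|v_\infty|=L_{\mathfrak{TG}}(\gamma)$.

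The key preliminary step --- and the one I expect to be the main obstacle --- is to show $\eta\in\mathfrak{T}(\gamma)$ \emph{without} assuming $\eta$ is non-self-conjugate (so that Proposition \ref{closedness} is unavailable). Here I would exploit that $v_k\to v_\infty$ in $TM$ forces the geodesic loops $\eta_k:t\mapsto\exp(t v_k)$ to converge to $\eta$ in $C^1$ on $[0,1]$, with base points $p_k\to p:=\pi(v_\infty)$. Fixing a large $k_0$ so that $\eta$ and $\eta_{k_0}$ lie in a common family of convex normal neighborhoods, I would build the free homotopy $\sigma(s,t)$ sending $t$ to the point at parameter $s$ along the short geodesic from $\eta(t)$ to $\eta_{k_0}(t)$. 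Since $\eta(0)=\eta(1)$ and $\eta_{k_0}(0)=\eta_{k_0}(1)$, the endpoints coincide and every longitudinal curve $\sigma_s$ is a \emph{loop}; and because $\partial_t\sigma_s\to\dot\eta$ uniformly in $s$ as $k_0\to\infty$, timelikeness (an open condition) is preserved, so each $\sigma_s$ is a timelike loop. This exhibits $\eta$ as freely $t$-homotopic to $\eta_{k_0}\in\mathfrak{TG}(\gamma)\subset\mathfrak{T}(\gamma)$, whence $\eta\in\mathfrak{T}(\gamma)$. (These longitudinal loops are not geodesics, which is exactly why $t$-homotopy, rather than $TG$-homotopy, is the correct notion at this step.)

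With $\eta\in\mathfrak{T}(\gamma)$ in hand, the dichotomy falls out. If $\eta$ is self-conjugate, we land in the first alternative. Otherwise $\eta$ is non-self-conjugate, and Proposition \ref{closedness} (applied to the sequence $\eta_k$) upgrades the conclusion to $\eta\in\mathfrak{TG}(\gamma)$. In that case I claim $\eta$ is \emph{locally maximizing}: since $\eta$ is non-self-conjugate, $\mathcal{L}$ is an $n$-dimensional submanifold near $v_\infty$ (Remark \ref{callitup}(2)), so any $w\in\mathcal{L}$ close to $v_\infty$ is joined to $v_\infty$ by a path in $\mathcal{L}$; by Remark \ref{callitup}(4) the corresponding loop is $TG$-homotopic to $\eta$, hence lies in $\mathfrak{TG}(\gamma)$ and so has length $|w|\leq L_{\mathfrak{TG}}(\gamma)=|v_\infty|=\comp(\eta)$. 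Corollary \ref{cor1hillclimb} then forces $\eta$ to be a closed timelike geodesic, giving the second alternative.

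Finally, the minimizing statement is proved by the identical scheme applied to a sequence $v_k\in\mathcal{L}_\gamma$ with $|v_k|\to l_{\mathfrak{TG}}(\gamma)$: precompactness yields a limit $v'_\infty$, the $C^1$-closeness argument gives $\eta'\in\mathfrak{T}(\gamma)$, and in the non-self-conjugate case the same local argument (with inequalities reversed) shows $\eta'$ is locally minimizing, so Corollary \ref{cor1hillclimb} makes it a closed timelike geodesic. The \emph{only} place the extra hypothesis $l_{\mathfrak{TG}}(\gamma)>0$ enters is in guaranteeing that the limit velocity $v'_\infty$ is timelike rather than null --- precisely the role played automatically by $L_{\mathfrak{TG}}(\gamma)\geq\comp(\gamma)>0$ in the maximizing case.
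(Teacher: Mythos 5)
Your proposal is correct and follows essentially the same route as the paper: extract a maximizing (resp.\ minimizing) sequence, use precompactness to obtain a limit velocity whose norm equals the extremal length (timelike because that length is positive), and then split on self-conjugacy, invoking Proposition \ref{closedness} and the Hill-Climb Lemma in the non-self-conjugate case. The only substantive addition is that you supply an explicit convex-neighborhood interpolation for the step $\eta\in\mathfrak{T}(\gamma)$, which the paper dismisses as ``not hard to see,'' and you route the final step through Corollary \ref{cor1hillclimb} via local extremality rather than citing Theorem \ref{stretch} directly --- both are faithful elaborations, not deviations.
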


\noindent {\it Proof.} 
We only prove the first statement, since the arguments when the infimum  $l_{\mathfrak{TG}}(\gamma)>0$ are entirely analogous.  Let $(\eta_{k}: [0,1] \rightarrow M)_{k\in \mathbb{N}}$ be a sequence of timelike geodesic loops in the $TG$-homotopy class $\mathfrak{TG}(\gamma)$ whose sequence of lengths $(\ell_k=|\dot{\eta}_k(0)|)_{k \in \mathbb{N}}$ approaches $\ell := L_{\mathfrak{TG}}(\gamma)\leq \infty $, i.e., 
	\begin{equation}\label{limit0}
		\ell_k \rightarrow \ell. 
	\end{equation}
	Let $p_k := \eta_k(0)=\eta_k(1)$. The assumed precompactness implies that, up to passing to a subsequence, we have 
	$$\dot{\eta}_k(0) \rightarrow v \in \mathcal{D} \Rightarrow \ell_k = |\dot{\eta}_k(0)| \rightarrow |v| \equiv \ell <\infty.$$
	But the very existence of $\gamma$ already implies $\ell >0$, so $v$ is timelike. If $p \in M$ is the basepoint of $v$, then $p_k\rightarrow p$. Therefore, if we define $\eta:t\in [0,1] \mapsto \exp_p(t\cdot v) \in M$, this is a timelike geodesic loop and it is not hard to see that $\eta\in\mathfrak{T}(\gamma)$. Then, either $\eta$ is self-conjugate, or, by Prop. \ref{closedness} and Theorem \ref{stretch}, $\eta \in \mathfrak{TG}(\gamma)$ is a closed timelike geodesic.
	%i.e., $\exp_p$ is nonsingular at $v$ and, hence the map $E(u)=(\pi(u),\exp(u))$, for $u\in \mathcal{D}$ is also nonsingular at $v$. 

\qcd

The precompactness of some $TG$-homotopy class $\mathfrak{TG}(\gamma)$ used in Proposition \ref{newmainthm'} may not be easy to check in practice. We therefore give here concrete, natural geometric conditions in which it holds. Observe that unlike in Prop. \ref{newmainthm'}, we now need to assume that the underlying manifold $M$ is compact.

\begin{thm}\label{newmainthm}
Let $(M,g)$ be a compact Lorenztian manifold. Assume that 
\begin{itemize}
%\item[i)] \textcolor{red}{there are no self-conjugate points along timelike geodesics;}
%$Conj_c(p) =\emptyset$ for any $p \in M$; 
\item[i)] $(M,g)$ contains some timelike geodesic loop $\gamma$ such that $\mathfrak{T}(\gamma)$ contains no self-conjugate timelike geodesic loops; %(Again, this will occur in particular if $(M,g)$ has non-negative curvatures on timelike planes.)
%\item[ii)] $l_{\mathfrak{TG}}(\gamma)>0$ (resp. $L_{\mathfrak{TG}}(\gamma) <\infty$);
%has lengths bounded above (resp. bounded below by a positive number), i.e., with the notation of (\ref{upperbound}) we have 
%$$l_{\mathfrak{TG}}(\gamma) <\infty\qquad (\hbox{resp.}\;\; l_{\mathfrak{TG}}(\gamma)>\delta_0>0);$$
\item[ii)] there exists a regular Lorentzian covering $\phi:(\hat{M},\hat{g})\rightarrow (M,g)$ such that $(\hat{M},\hat{g})$ is causally pseudoconvex and disprisoning (which occurs, e.g., if $(\hat{M},\hat{g})$ is globally hyperbolic). 
\end{itemize}
Then, $\mathfrak{TG}(\gamma)$ contains a closed timelike geodesic of length $L_{\mathfrak{TG}}(\gamma)$. If, in addition, $l_{\mathfrak{TG}}(\gamma)>0$, then $\mathfrak{TG}(\gamma)$ also contains a closed timelike geodesic of length $l_{\mathfrak{TG}}(\gamma)$.  
\end{thm}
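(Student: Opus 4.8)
The plan is to deduce the statement entirely from Proposition \ref{newmainthm'}: the whole substance of the argument is to verify that, under hypotheses (i)--(ii), the $TG$-homotopy class $\mathfrak{TG}(\gamma)$ is precompact in $\mathcal{D}\subset TM$. Once this is granted, Proposition \ref{newmainthm'} produces a timelike geodesic loop $\eta$ of length $L_{\mathfrak{TG}}(\gamma)$ which is \emph{either} a self-conjugate loop in $\mathfrak{T}(\gamma)$ \emph{or} a closed timelike geodesic in $\mathfrak{TG}(\gamma)$; hypothesis (i) rules out the first alternative, so $\eta$ is forced to be a closed timelike geodesic in $\mathfrak{TG}(\gamma)$. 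The identical reasoning applied to the infimum --- legitimate precisely once $l_{\mathfrak{TG}}(\gamma)>0$, which guarantees that the relevant minimizing limit is timelike rather than null --- delivers the second closed timelike geodesic, of length $l_{\mathfrak{TG}}(\gamma)$. Thus the only real work is the precompactness of $\mathfrak{TG}(\gamma)$.

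To establish precompactness I would argue by sequences and lift to the cover. Let $(\eta_k)\subset\mathfrak{TG}(\gamma)$ be arbitrary. Since $M$ is compact one may fix a relatively compact fundamental domain for the deck group $G=\mathrm{Deck}(\phi)$, with compact closure $D\subset\hat M$, and lift each $\eta_k$ to a future-directed timelike geodesic segment $\hat\eta_k\colon[0,1]\to\hat M$ with $\hat\eta_k(0)\in D$. Because all the $\eta_k$ are freely $t$-homotopic to $\gamma$, the terminal point is a deck-translate of the initial one, $\hat\eta_k(1)=g_k\,\hat\eta_k(0)$, with $g_k$ in the fixed conjugacy class $\mathcal{K}\subset G$ determined by the free homotopy class of $\gamma$. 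Suppose, for the moment, that after passing to a subsequence both endpoints $\hat\eta_k(0)$ and $\hat\eta_k(1)$ stay in one compact set $K\subset\hat M$. Then causal pseudoconvexity confines every $\hat\eta_k$ to a single compact $K^\ast\supset K$; a reparametrization/limit-curve argument then shows the velocities $|\dot{\hat\eta}_k(0)|$ --- equal to the lengths of the $\eta_k$ --- cannot blow up, for otherwise the unit-speed limit geodesic would lie in $K^\ast$ on $[0,\infty)$, contradicting disprisonment. Hence $\dot{\hat\eta}_k(0)$ ranges in a compact subset of $T\hat M$ and, the geodesics being trapped in $K^\ast$, a subsequential limit $\hat v$ generates a timelike geodesic defined on all of $[0,1]$ whose endpoints are deck-related; projecting by $\phi$ yields a limit timelike geodesic loop $\eta$ in $M$ with $\dot\eta_k(0)\to\dot\eta(0)\in\mathcal{D}_{\mathcal{T}}$ (and $\eta\in\mathfrak{T}(\gamma)$ by continuity of the free homotopy class). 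This is exactly the required precompactness.

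The crux, and the step I expect to be the main obstacle, is precisely the parenthetical assumption above: that the lifts can be confined, along a subsequence, to a fixed compact set of $\hat M$. Confining the initial endpoints is free, as $\hat\eta_k(0)\in D$. The difficulty is the terminal endpoints $\hat\eta_k(1)=g_k\hat\eta_k(0)$: although the $g_k$ lie in the single class $\mathcal{K}$, a conjugacy class in $G$ is typically infinite, and normalizing $g_k$ to a fixed representative $g_0$ (writing $g_k=h_kg_0h_k^{-1}$ and translating by $h_k^{-1}$) drives the base point $h_k^{-1}\hat\eta_k(0)$ out of $D$ unless $h_k$ can be chosen in the centralizer of $g_0$. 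Controlling this interplay between the deck-group action and the metric geometry is where the hypotheses are used in full force: compactness of $M$ bounds the base points modulo $G$, while causal pseudoconvexity and disprisonment of $\hat M$ are what prevent the terminal endpoints from escaping to infinity --- equivalently, they force only finitely many deck transformations of $\mathcal{K}$ to be realizable by causal geodesics issuing from $D$ of bounded length, so that along a subsequence $g_k$ may be taken constant and the endpoints converge. Making this precise, e.g.\ through the compactness of the space of causal geodesics joining two compact sets in a pseudoconvex, disprisoning spacetime developed in \cite{costaesilva&flores&honorato21}, is the technical heart of the argument; everything else is the soft reasoning of the first two paragraphs.
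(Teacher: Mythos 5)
Your overall skeleton is exactly the paper's: reduce everything to the precompactness of the set of initial velocities of $\mathfrak{TG}(\gamma)$, feed that into Proposition \ref{newmainthm'}, and use hypothesis (i) to kill the self-conjugate alternative. You also correctly isolate the one genuinely hard step --- confining the terminal endpoints $\hat{\eta}_k(1)$ of the lifts to a fixed compact subset of $\hat{M}$. But you do not actually close that step, and the mechanism you sketch for closing it does not work. You propose that causal pseudoconvexity and disprisonment ``force only finitely many deck transformations of $\mathcal{K}$ to be realizable by causal geodesics issuing from $D$ of bounded length, so that along a subsequence $g_k$ may be taken constant.'' This is circular: pseudoconvexity only gives information about geodesic segments whose \emph{endpoints already lie in a compact set}, and a length bound is not available a priori (finiteness of $L_{\mathfrak{TG}}(\gamma)$ is a conclusion of the theorem, not a hypothesis). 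If $\hat{\eta}_k(1)=g_k\hat{\eta}_k(0)$ escapes to infinity, neither pseudoconvexity nor disprisonment says anything about the segments $\hat{\eta}_k$, so no finiteness of realizable $g_k$ follows.

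The paper closes the gap by a purely topological argument that uses the full strength of $TG$-homotopy (not merely membership in a free $t$-homotopy class, which is all your conjugacy-class picture retains) together with the \emph{regularity} of the covering. Given $\eta_k,\eta_{k'}\in\mathfrak{TG}(\gamma)$, one lifts the connecting $TG$-homotopy $\sigma_{k,k'}$ to $\hat{M}$ starting at $\hat{p}_k$; because every longitudinal curve of $\sigma_{k,k'}$ is a \emph{loop}, the two transversal curves $s\mapsto\hat{\sigma}_{k,k'}(s,0)$ and $s\mapsto\hat{\sigma}_{k,k'}(s,1)$ are lifts of one and the same curve in $M$, and comparing them via the deck transformation $F_k$ with $F_k(\hat{\eta}_k(0))=\hat{\eta}_k(1)$ forces $F_k=F_{k'}$. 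Thus the deck element relating the endpoints is literally the \emph{same} for every member of $\mathfrak{TG}(\gamma)$ (once the initial lifts are normalized to a single sheet over a neighborhood of $p$) --- no passage to a subsequence and no finiteness argument is needed, and the terminal endpoints land in one fixed compact set $\hat{K}_{i_1}$. After that, the properness of the causal exponential map in a pseudoconvex and disprisoning spacetime (Theorem 3.5 of \cite{costaesilva&flores&honorato21}) yields convergence of $\dot{\hat{\eta}}_k(0)$ along a subsequence, which is the step your third paragraph gestures at correctly. So the missing idea in your proposal is precisely the lifted-homotopy argument showing the deck transformation is constant on a $TG$-homotopy class; without it, the endpoint confinement, and hence the precompactness, is not established.
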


\noindent {\it Proof.} %To fix ideas, we assume, as per $(ii)$, that $l_{\mathfrak{TG}}(\gamma)>0$, since the proof is entirely analogous for the case when $L_{\mathfrak{TG}}(\gamma)<\infty$. Let $(\eta_{k}: [0,1] \rightarrow M)_{k\in \mathbb{N}}$ be a sequence of timelike geodesic loops in the $TG$-class $\mathfrak{TG}(\gamma)$ whose sequence of lengths $(\ell_k=|\dot{\eta}_k(0)|)_{k \in \mathbb{N}}$ approaches $\ell := l_{\mathfrak{TG}}(\gamma)>0$, i.e., 
%\begin{equation}\label{limit1}
%\ell_k \rightarrow \ell. 
%\end{equation}
The idea is to use condition (ii) (in combination with $l_{\mathfrak{TG}}(\gamma)>0$ for the last statement) to obtain the needed precompactness in Prop. \ref{newmainthm'}, and then to use condition (i) to discard the self-conjugate geodesic timelike loop in the thesis of that proposition. 

In other words, it suffices to show that the set of initial velocities of elements of $\mathfrak{TG}(\gamma)$ is precompact in $\mathcal{D}$.

Let $(\eta_{k}: [0,1] \rightarrow M)_{k\in \mathbb{N}}$ be a sequence of timelike geodesic loops in the $TG$-homotopy class $\mathfrak{TG}(\gamma)$. Let $p_k := \eta_k(0)=\eta_k(1)$. Since $M$ is compact we may assume, up to passing to a subsequence, that $p_k \rightarrow p$. 

Pick an evenly covered, connected neighborhood $U\ni p$ in $M$, and write 
$$\bigsqcup _{i\in I}\hat{U}_i = \phi^{-1} (U).$$
Choose an open set $V\ni p$ with (compact) closure $\overline{V}\subset U$, some $i_0 \in I$, and $\hat{p} \in \hat{U}_{i_0}$ such that $\phi(\hat{p}) =p$. Eventually $p_k \in V$, so we may assume this is always the case, and consider the lifts $\hat{\eta}_k: [0,1] \rightarrow \hat{M}$ of the geodesics $\eta_k$ starting at $\hat{p}_k := (\phi|_{\hat{U}_{i_0}})^{-1}(p_k) \rightarrow \hat{p}$. 

We have $\hat{\eta}_k(0) \in \hat{K}_{i_0} := (\phi|_{\hat{U}_{i_0}})^{-1}(\overline{V}) $ and 
$\hat{\eta}_k(1) \in \hat{K}_{i_k} := (\phi|_{\hat{U}_{i_k}})^{-1}(\overline{V}) $ for some $i_k\in I$ (possibly $i_k=i_0$), where $\hat{K}_{i_0},\hat{K}_{i_k}$ are thus compact sets. 

Let a pair $k,k'\in \mathbb{N}$ be given. We claim that $U_{i_k}=U_{i_{k'}}$, and therefore in particular we have $i_k=i_{k'}$ and $\hat{K}_{i_k}=\hat{K}_{i_{k'}}$. To see this, we use the fact that $\eta_k,\eta_{k'}$ are in the same $TG$-homotopy class and that the covering is regular. Indeed, let $\sigma_{k,k'}: [0,1]\times [0,1] \rightarrow M$ be a $TG$-homotopy with $\sigma_{k,k'}(0,t) = \eta_k(t)$ and $\sigma_{k,k'}(1,t) =\eta _{k'}(t)$ for every $t \in [0,1]$. Let $\hat{\sigma}_{k,k'}:[0,1] \times [0,1] \rightarrow \hat{M}$ be the (unique) lift of $\sigma_{k,k'}$ through $\phi$ with $\hat{\sigma}_{k,k'}(0,0) = \hat{p}_k$. (This is of course a  homotopy on $(\hat{M},\hat{g})$ between $\hat{\eta}_k$ and $\hat{\eta}_{k'}$ whose longitudinal curves are timelike geodesics, though not necessarily loops.)

By the regularity of the covering there exist deck transformations $F_k, F_{k'}:\hat{M} \rightarrow \hat{M}$ with $F_k(\hat{\eta}_k(0)) = \hat{\eta}_k(1)$ and $F_{k'}(\hat{\eta}_{k'}(0)) = \hat{\eta}_{k'}(1)$. Now, consider the curve $$\hat{\zeta}_{k,k'}: s\in [0,1] \mapsto \hat{\sigma}_{k,k'}(s,0) \in \hat{M}$$
which connects $\hat{p}_k$ and $\hat{p}_{k'}$. One easily checks that the curves $F_k\circ \hat{\zeta}_{k,k'}$ and $$\hat{\xi}_{k,k'}:s\in [0,1] \mapsto \hat{\sigma}_{k,k'}(s,1)\in \hat{M}$$
are both lifts through $\phi$ starting at $\hat{\eta}_k(1)$ of the same curve 
$$s\in [0,1] \mapsto \sigma_{k,k'}(s,0) =\sigma_{k,k'}(s,1) \in M$$
spanning the base points of the longitudinal geodesics of $\sigma_{k,k'}$. We deduce that $F_k\circ \hat{\zeta}_{k,k'}\equiv \hat{\xi}_{k,k'}$, and in particular $F_k\circ \hat{\zeta}_{k,k'}(1)=\hat{\xi}_{k,k'}(1)$. Therefore,
$$F_k(\hat{p}_{k'}) = F_k(\hat{\eta}_{k'}(0)) = F_k(\hat{\zeta}_{k,k'}(1)) = \hat{\xi}_{k,k'}(1) = \hat{\eta}_{k'}(1) = F_{k'}(\hat{\eta}_{k'}(0)) = F_{k'}(\hat{p}_{k'}).$$
Whence we conclude that $F_{k}= F_{k'}$, which in turn establishes that $U_{i_k}=U_{i_{k'}}$ as claimed.

It now follows from the previous claim that all lifts $\hat{\eta}_k$ (which are timelike geodesics in $(\hat{M},\hat{g})$) have endpoints on the compact set 
$$\hat{K} := \hat{K}_{i_0}\cup \hat{K}_{i_1} \subset \hat{M}.$$ 
If we now write 
$$\hat{\eta}_k(1) = \hat{\exp}_{\hat{p}_k}(\dot{\hat{\eta}}_k(0)),$$
then causal pseudoconvexity and disprisonment mean that the exponential map $\hat{\exp}$ restricted to causal vectors is {\it proper}, i.e., inverse images of compact sets in $\hat{M}$ - here, $\hat{K}$, are compact in $\hat{\mathcal{D}}$ \cite[Thm 3.5]{costaesilva&flores&honorato21}. Therefore, again up to passing to a subsequence,
\begin{equation}\label{ohmy} \dot{\hat{\eta}}_k(0)\rightarrow \hat{v} \in \hat{\mathcal{D}}_{\hat{p}} .\end{equation}
%where the latter implication follows because each $\hat{\eta}_k$ is a $\hat{g}$-unit geodesic. 
%We conclude from that $|\hat{v}|_{\hat{g}}\equiv \ell$, so 
%if we put $\hat{u} := \hat{v}/\ell$, 
%the geodesic $$\hat{\eta}:t \in [0,1] \mapsto \hat{\exp}_{\hat{p}}(t \cdot \hat{v}) \in \hat{M}$$
%is a timelike geodesic in $(\hat{M},\hat{g})$ which projects to a timelike geodesic loop 
%$\eta = \phi \circ \hat{\eta}$ in $(M,g)$ starting and ending at $p$. 
Applying $d\phi$ to (\ref{ohmy}) also yields 
$$\dot{\eta}_k(0)\rightarrow d\phi(\hat{v}) =: v \in \mathcal{D}_p.$$
This establishes the desired precompactness, and the conclusion now follows from Prop. \ref{newmainthm'}\qed

\section{Clifford translations and globally hyperbolic coverings}\label{sec3}

In the previous section we derived existence results for closed timelike geodesics which arose directly from our understanding and controlling certain variational aspects on the space of timelike geodesic loops introduced in section \ref{sec1.1}. In this section, we return to the specific context of Lorentzian manifolds which admit a covering by a globally hyperbolic spacetime, but now under the additional requirement that the latter possess certain symmetries, the so-called {\it Clifford translations}. On the one hand, global hyperbolicity simplifies some technical arguments, since the space of {\it causal curves} is already quite tame for globally hyperbolic spacetimes, and the need to control $TG$-homotopy classes is obviated. Therefore, no analysis (say) of the precompactness of these classes such as that used in section \ref{sec2} will be made here. But on the other hand, our general approach has inspired new proofs of existence results for closed timelike geodesics in this ambient which are slightly stronger than similar ones in the literature, and that has motivated us to revisit them here. The goal of this section is thus to make contact with some of these results. (See esp. \cite[Theorem 3.5]{guediri03}.)

%A timelike geodesic loop $\gamma:[0,1]\rightarrow M$ is said to be \textit{locally maximizing} (resp. \textit{locally minimizing}) if $\comp(\zeta) \leq \comp(\gamma)$ (resp. $\comp(\zeta) \geq \comp(\gamma)$)\footnote{Always with respect to the Lorentzian metric $g$, otherwise we will indicate the metric in the notation in these notes: creo que ya se dijo esto al principio del paper.} for any timelike geodesic loop $\zeta:[0,1]\rightarrow M$ with initial conditions $(\zeta(0),\dot{\zeta}(0))\in TM$ close\footnote{Here, $TM$ is itself implicitly endowed with a suitable Riemannian metric, say, the Sasaki metric associated to some auxiliar Riemannian metric $h$ on $M$.} enough to $(\gamma(0),\dot{\gamma}(0))$. We say that $\gamma$ is \textit{locally extremal} if it is either locally maximizing or locally minimizing. We then have an immediate consequence of this definition in view of the Hill-Climb Lemma:

%\begin{cor}\label{cor1hillclimb}
%Any locally extremal, non-self-conjugate timelike geodesic loop in the Lorentzian manifold $(M,g)$ is a closed timelike geodesic. 
%\end{cor}
%\qcd

The results in this section will be confined to {\it spacetimes}, as the conditions we will present are most natural in this context; accordingly, $(M,g)$ will be taken to be a spacetime for the remainder of the paper. Again, we briefly recall some basic definitions just to set notation and avoid ambiguities, referring to \cite{bookbeem96,bookoneill83} for details. 

On a spacetime $(M,g)$,  causal/timelike curves are divided in two exclusive classes: they can be either {\it future-directed}, if the tangent vectors along the curve lie all in the future causal cones, or {\it past-directed}, if the tangent vectors lie in the past cones. Given any set $A\subset M$, its {\it chronological future} is the set $I^+(A)$ of all points $p\in M$ which can reached via a future-directed timelike curve starting at $A$. The {\it causal future} of $A$ is the set $J^{+}(A)$ consisting of all points in $A$ itself, together with all those $p\in M$ which can be reached via a future-directed causal curve starting at $A$. The {\it chronological past} $I^-(A)$ (resp. {\it causal past} $J^-(A)$) can be defined in a dual fashion by taking past-directed timelike/causal curves in the above definition. It is well-known that $I^{\pm}(A)$ is always open, and for any $p \in M$ we denote $I^{\pm}(p):= I^{\pm}(\{p\})$ (resp. $J^{\pm}(p):= J^{\pm}(\{p\})$).

The {\it Lorentzian distance function} $d:= d_g:M\times M \rightarrow [0,+\infty]$  (alternatively called {\it time-separation function} in the literature) is given by 
\begin{equation}
    d(p,q) = \left\{ \begin{array}{cc}
         \sup _{\alpha \in \mathfrak{C}^+(p,q)} {\rm length}(\alpha)  ,&  \mbox{if $\mathfrak{C}^{+}(p,q) \neq \emptyset$},\\
         0, & \mbox{if $\mathfrak{C}^{+}(p,q) = \emptyset$},
    \end{array}\right. \nonumber
\end{equation}
where $\mathfrak{C}^+(p,q)$ denotes the set of all future-directed causal curves starting at $p$ and ending at $q$. In spite of its name, the Lorentzian distance function $d$ is {\it not} a distance in the sense of metric spaces (one reason why some authors prefer to call it {\it time-separation function} in the literature), and in particular we can well have $d(p,p) = +\infty$. It is always lower semicontinuous, but it may not be continuous. But it is finite-value and continuous when $(M,g)$ is globally hyperbolic. 

An isometry $\rho:M\rightarrow M$ of the spacetime $(M,g)$ is a \textit{Clifford translation} if $d(p,\rho(p))$ is constant for all $p\in M$. We say that the isometry $\rho$ is \textit{future timelike} if for each $p\in M$ either $\rho(p)=p$ or else $\rho(p)\in I^+(p)$. These two are in principle entirely different notions. For example, if one is given either a null or a spacelike complete Killing vector field $X$ possessing achronal orbits, then any stage $\rho_t$ ($t\in \mathbb{R}$) of its flow is a Clifford translation: $d(p, \rho_t(p)) \equiv 0$ for any $p\in M$. Even if the Killing field $X$ is everywhere future-directed timelike, if $(M,g)$ is {\it chronological}, i.e., one without timelike loops, then again $d(p, \rho_t(p)) \equiv 0$ for any $p\in M$ provided $t<0$, for then $\rho_t(p)$ is in the past of $p$. None of these examples yields a future timelike isometry except for $\rho_0 = Id_M$, which is trivially a future timelike Clifford translation in chronological spacetimes (since $d(p,p)\equiv 0$ for any $p\in M$ in this case\footnote{Curiously, by this definition, if $(M,g)$ is {\it totally vicious}, i.e, if through {\it any} point of $M$ there passes a timelike loop, then the identity map is again a Clifford translation, this time because $d(p,p) = +\infty$ for any $p\in M$.}). Such trivial Clifford translations/future timelike isometries will be consistently eschewed in what follows.

The following result due to Beem, Ehrlich and Markvorsen provides a relation between future timelike isometries and Clifford translations in a globally hiperbolic, future $1$-connected spacetime. (Recall also that $(M,g)$ is said to be \textit{future $1$-connected} if any two future-directed timelike curves connecting two given points are $t$-homotopic.)
\begin{thm}[\cite{beem&ehrlich&markvorsen88}, Theor. 4.2]\label{Teo_Beem_Isom_Clifford_Translacao}
Let $(M,g)$ be a globally hyperbolic, future $1$-connected spacetime with all timelike sectional curvatures $\geq 0$ and which is either timelike, or spacelike, or null geodesically complete. Then, any non-trivial future timelike isometry of $(M,g)$ is a Clifford translation.
\end{thm}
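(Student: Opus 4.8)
The plan is to prove that the \emph{displacement function}
$$f:M\to[0,+\infty),\qquad f(p):=d(p,\rho(p))$$
is constant. Two observations are essentially free. First, because $\rho$ is an isometry, $f(\rho(p))=d(\rho(p),\rho^2(p))=d(p,\rho(p))=f(p)$, so $f$ is constant along every $\rho$-orbit. Second, global hyperbolicity makes $d$ finite and continuous, so $f$ is continuous, and for each $p$ with $\rho(p)\neq p$ there is a maximal, future-directed, unit-speed timelike geodesic $\gamma_p:[0,L]\to M$ with $L=f(p)$, $\gamma_p(0)=p$, $\gamma_p(L)=\rho(p)$. (The trivial case is easy to dispose of: global hyperbolicity forces chronology, hence $d(q,q)=0$ for all $q$, so if $\rho$ had a fixed point then constancy of $f$ would read $f\equiv 0$, i.e.\ $\rho=\id_M$; thus for a nontrivial $\rho$ one has $f>0$ and $\rho$ fixed-point free.) I would then use the hypotheses to upgrade continuity to smoothness: since the timelike sectional curvatures are $\geq 0$, there are no timelike conjugate points (Remark \ref{remflaherty}), so the relevant exponential maps are nonsingular along the $\gamma_p$; combined with future $1$-connectedness (which pins down the homotopy class and yields uniqueness of the maximal geodesic between $p$ and $\rho(p)$), this makes $\gamma_p$ unique and $f$ smooth, with $p\mapsto\dot\gamma_p(0)$ a smooth future-directed timelike unit vector field.

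Next I would compute the first variation of $f$. Varying the basepoint along a curve with velocity $V$ at $p$, the endpoint $\rho(p)$ moves with velocity $d\rho(V)$, and the standard first-variation formula for timelike arclength gives, with $T_0:=\dot\gamma_p(0)$ and $T_L:=\dot\gamma_p(L)$,
$$df_p(V)=g(V,T_0)-g(d\rho(V),T_L)=g\bigl(V,\,T_0-d\rho^{-1}T_L\bigr),$$
using $g(d\rho(V),T_L)=g(V,d\rho^{-1}T_L)$. Hence $\nabla f(p)=T_0-d\rho^{-1}T_L$. Since a future timelike isometry is time-orientation preserving, both $T_0$ and $d\rho^{-1}T_L$ are future-directed unit timelike vectors at $p$; the reverse Cauchy--Schwarz inequality $g(u,w)\le -1$ for such vectors yields $g(\nabla f,\nabla f)=-2-2g(T_0,d\rho^{-1}T_L)\ge 0$, so $\nabla f(p)$ is spacelike, and it vanishes \emph{iff} $d\rho(T_0)=T_L$. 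This last ``axis'' condition says exactly that $\gamma_p$ and its $\rho$-image $\rho\circ\gamma_p$ concatenate without a corner. Thus the whole theorem reduces to showing $\nabla f\equiv 0$.

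To sandwich things I would record a monotonicity statement coming from the reverse triangle inequality. For an interior point $q=\gamma_p(s)$, the broken causal curve $\gamma_p|_{[s,L]}$ followed by $(\rho\circ\gamma_p)|_{[0,s]}$ runs from $q$ to $\rho(q)$ and has length $(L-s)+s=L$; since $d$ is a supremum of lengths, $f(q)\ge L=f(p)$. Together with orbit-constancy this gives, for $h(t):=f(\gamma_p(t))$,
$$h(t)\ge f(p)\quad(t\in[0,L]),\qquad h(0)=h(L)=f(p),$$
with equality throughout \emph{iff} the broken curve is itself maximal, i.e.\ iff there is no corner at $\rho(p)$, i.e.\ iff $\nabla f(p)=0$.

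The crux, and what I expect to be the main obstacle, is to close the sandwich by proving that $f$ is \emph{convex along timelike geodesics}, $\mathrm{Hess}\,f(T,T)\ge 0$ for timelike $T$. Granting this, $h$ is convex on $[0,L]$ with equal endpoint values $f(p)$, so $h\le f(p)$; combined with $h\ge f(p)$ this forces $h\equiv f(p)$, hence $\nabla f(p)=0$, and as $p$ is arbitrary and $M$ is connected, $f$ is constant and $\rho$ is a Clifford translation. The convexity itself is where the curvature and completeness hypotheses must do real work: I would attack it through the second variation of the two-moving-endpoints family, expressing $\mathrm{Hess}\,f(T,T)$ via the index form of the associated Jacobi field plus boundary terms governed by $\rho$, and then invoking $K\ge 0$ on timelike planes (which controls the curvature term in the index form and, via the absence of conjugate points, its definiteness) together with geodesic completeness (to extend the $\gamma_p$ and the broken concatenation $\cdots,\rho^{-1}\gamma_p,\gamma_p,\rho\gamma_p,\cdots$ to a bi-infinite timelike curve on which the rigidity can be run). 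Getting the correct sign of this Hessian is delicate precisely because $K\ge 0$ gives timelike Ricci curvature of the \emph{opposite} sign to what a splitting theorem would want, so no Lorentzian splitting argument is available and the rigidity must be extracted from the no-conjugate-point/index-form side. A further subtlety I would have to address is why completeness of \emph{any single} causal type (timelike, spacelike, or null) suffices to run this argument, which I expect again to hinge on the nonfocusing forced by $K\ge 0$.
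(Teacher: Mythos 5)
This theorem is quoted by the paper from Beem--Ehrlich--Markvorsen (their Theorem 4.2) and is not proved here, so there is no in-paper argument to compare against; your proposal can only be judged on its own terms. Much of your setup is sound: orbit-invariance and continuity of $f(p)=d(p,\rho(p))$, the first-variation identity $\nabla f(p)=T_0-d\rho^{-1}T_L$ with the reverse Cauchy--Schwarz consequence that $\nabla f$ is spacelike and vanishes exactly at the no-corner (``axis'') condition, the lower bound $f(\gamma_p(t))\ge f(p)$ on $[0,L]$ from the broken curve, and the use of future $1$-connectedness together with the absence of timelike conjugate points (Remark \ref{remflaherty}, Lorentzian Hadamard--Cartan) to get uniqueness of $\gamma_p$ and smoothness of $f$. (Your disposal of the fixed-point case is circular as written, since it invokes the constancy of $f$ that is being proved, but that is a repairable detail.)

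The genuine gap is at the step you yourself flag as the crux: under these hypotheses the displacement function is \emph{not} convex along timelike geodesics --- it is \emph{concave}, and the second variation with $K\ge 0$ on timelike planes yields the opposite sign to the one your sandwich needs. If $c$ is a geodesic with $\dot c(0)=X$ and $J$ is the Jacobi field along $\gamma_p$ with $J(0)=X$, $J(1)=d\rho(X)$, the boundary acceleration terms vanish because $c$ and $\rho\circ c$ are both geodesics, and one finds
$$\mathrm{Hess}\,f(X,X)=-\frac1L\int_0^1\Bigl[g(R(J,T)J,T)+g(J'^{\perp},J'^{\perp})\Bigr]\,dt\;\le\;0,$$
since $J'^{\perp}$ is spacelike and $K\ge 0$ on timelike planes makes the curvature term nonnegative --- the very same sign that makes timelike geodesics maximizing. (Already in Minkowski space $d(p,q)=|q-p|$ is concave on the future cone by the reverse triangle inequality, so $p\mapsto|(A-I)p+a|$ is concave, never non-trivially convex.) Hence the sandwich ``$h\ge f(p)$ on $[0,L]$, $h(0)=h(L)=f(p)$, $h$ convex'' cannot be closed: concavity is perfectly compatible with $h$ bulging upward in the interior, and the index-form computation you propose would deliver $\le 0$, not $\ge 0$. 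The repair is to use the concavity the curvature actually provides, together with completeness, in the opposite way: the Hessian bound above holds for $X$ of \emph{any} causal character (the plane spanned by $J^\perp$ and $T$ is timelike regardless), so along a complete geodesic of whichever type is assumed complete, $f$ is a nonnegative concave function on all of $\mathbb{R}$, hence constant; since any two points of $M$ are joined by broken geodesics of a fixed causal type, $f$ is constant on $M$, which is the Clifford property. This is essentially the Beem--Ehrlich--Markvorsen argument, and it also explains why completeness in any single one of the three causal senses suffices.
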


These considerations motivate a convenient definition.
\begin{defi}
An isometry $\rho$ of the spacetime $(M,g)$ is a \textit{future timelike Clifford translation} if $\rho$ is a future timelike isometry {\it and} a Clifford translation.
\end{defi}

It is easy to come up with simple examples to illustrate situations where the hypotheses in Thm. \ref{Teo_Beem_Isom_Clifford_Translacao} may fail and yet one still might have a future timelike Clifford translation.
\begin{exe}
\textcolor{white}{quebra de linha}
\begin{enumerate}
\item[1.]  Consider the Minkowski spacetime $\mathbb{R}^n_1$ with standard time orientation. The translation $\rho_a:\mathbb{R}^n_1\rightarrow \mathbb{R}^n_1$ given by $\rho_{a}(x):=x+a$, where $a \in \mathbb{R}^n$ is any future-directed timelike vector, is a non-trivial future timelike Clifford translation (note that $d(x,\rho_{a}(x))=|a|$ for every $x\in\mathbb{R}^n$).
\item[2.] More generally, let $(N^{n-1}, h)$ be any connected Riemannian manifold, and consider the Lorentzian cylinder $(M = \mathbb{R}\times N, -dt^2 \oplus h)$. It is a globally hyperbolic, geodesically complete spacetime if and only if $(N,h)$ is complete (cf. \cite[Thm. 3.67, p. 103]{bookbeem96}; the time orientation is chosen so that $\partial/\partial t $ is future-directed). It does not need to be timelike $1$-connected, however; indeed, it might even have infinitely many $t$-homotopy  classes, even if $N$ is simply connected \cite{sanchez2}, so Thm. \ref{Teo_Beem_Isom_Clifford_Translacao} cannot be applied directly. But for each positive number $a>0$ the map $\rho_a: (t,x) \in M \mapsto (t+a,x) \in M$ is evidently a non-trivial future timelike Clifford translation, with $d(p,\rho_a(p)) = a$ for any $p\in M$. 
\end{enumerate}
\end{exe}

We now give sufficient conditions for the existence of a locally maximizing timelike geodesic loop (see also \cite[Cor. 5.6]{costaesilva&flores&honorato21}). Note that we need neither compactness of $M$ nor regularity of the covering. 
\begin{thm}\label{prop_loc_max_loop}
Let $(M,g)$ be a Lorentz manifold which admits a Lorentzian covering $\phi:(\widetilde{M},\widetilde{g})\rightarrow (M,g)$ where $(\widetilde{M},\widetilde{g})$ is a globally hyperbolic spacetime. If the group $\mathcal{D}(\phi)\subset Diff(\widetilde{M})$ of deck transformations contains a non-trivial future timelike Clifford translation of $(\widetilde{M},\widetilde{g})$, then $(M,g)$ has a locally maximizing timelike geodesic loop at each point $p\in M$.
\end{thm}
\begin{proof}
First note that $M=\phi(\widetilde{M})$ is connected, because $\widetilde{M}$ is connected and $\phi$ is a continuous map. Let $p\in M$ and let $\rho:\widetilde{M}\rightarrow \widetilde{M}$ be a non-trivial future timelike Clifford translation in $\mathcal{D}(\phi)$. Fix some $\tilde{p}_1\in \phi^{-1}(\{p\})$. We have that $\tilde{p}_2:=\rho(\tilde{p}_1)\neq \tilde{p}_1$, because $\rho$ is a non-trivial deck transformation of $\phi$, and as $\rho$ is a future timelike isometry it follows that $\tilde{p}_2\in I^+(\tilde{p}_1)$. Since $(\widetilde{M},\widetilde{g})$ is globally hyperbolic then exists a maximal future timelike geodesic segment $\tilde{\sigma}:[0,1]\rightarrow \widetilde{M}$ that connects $\tilde{p}_1$ to $\tilde{p}_2$, i.e., $d(\tilde{p}_1,\rho(\tilde{p}_1))=d(\tilde{p}_1,\tilde{p}_2)=\comp(\tilde{\sigma})$. Let $U \ni p$ be a connected, evenly covered neighborhood of $p$ in $M$ and write $\phi^{-1}(U)= \bigsqcup _{i\in I\subset \mathbb{N}}\widetilde{U}_i$ (i.e., $I$ is at most countably infinite). Without loss of generality we can assume that $\tilde{p}_i\in\widetilde{U}_i$ for $i=1,2$. As consequence, we have $\rho(\widetilde{U}_1)=\widetilde{U}_2$. We denote by $\tilde{u}$ the initial velocity of $\tilde{\sigma}$. 

Let $\tilde{E}:\widetilde{\mathcal{D}}\subset T\widetilde{M}\rightarrow \widetilde{M}\times\widetilde{M}$ the map given by $\tilde{E}(v)=(\widetilde{\pi}(v),\widetilde{\exp}(v))$ for all $v\in\widetilde{\mathcal{D}}$, where $\widetilde{\mathcal{D}}$ is maximal domain of the exponential map $\widetilde{\exp}$ on $(\widetilde{M},\widetilde{g})$ and $\widetilde{\pi}$ is the canonical projection of $T\widetilde{M}$ onto $\widetilde{M}$. As $\tilde{u}\in\widetilde{\mathcal{D}}$ we have that

$$\tilde{E}(\tilde{u})=(\widetilde{\pi}(\tilde{u}),\widetilde{\exp}(\tilde{u}))= (\tilde{\sigma}(0),\tilde{\sigma}(1))=(\tilde{p}_1,\tilde{p}_2)\in\widetilde{U}_1\times\widetilde{U}_2.$$

Thus, $\widetilde{W}:=\tilde{E}^{-1}(\widetilde{U}_1\times\widetilde{U}_2)$ is an open neighborhood of $\tilde{u}$ in $\widetilde{\mathcal{D}}$. Now, since the map $\phi:(\widetilde{M},\widetilde{g})\rightarrow (M,g)$ is a Lorentzian covering then the derivative $d\phi:T\widetilde{M}\rightarrow TM$ of $\phi$ is a local diffeomorphism that preserves the causal character of vectors. In particular, the restriction of the bundle map $d\phi$ between the open subsets $T\widetilde{U}_1:=\sqcup_{\tilde{q}\in\widetilde{U}_1}T_{\tilde{q}}\widetilde{M}$ and $TU:=\sqcup_{q\in U}T_qM$ is a diffeomorphism. Note that $\widetilde{W}\subset T\widetilde{U}_1\cap\widetilde{\mathcal{D}}$. Let $\tilde{v}\in \widetilde{W}$. Consider the inextendible geodesic $\tilde{\sigma}_{\tilde{v}}$ in $\widetilde{M}$ such that $\dot{\tilde{\sigma}}_{\tilde{v}}(0)=\tilde{v}$. As $\tilde{v}\in\widetilde{\mathcal{D}}$ then $\tilde{\sigma}_{\tilde{v}}$ is defined on the interval $[0,1]$. Thus, the inextendible geodesic $\sigma_{v}:=\phi\circ\tilde{\sigma}_{\tilde{v}}$ in $M$ is such that $\dot{\sigma}_{v}(0)=v:=d\phi(\tilde{v})$ and it is also defined on the interval $[0,1]$. We conclude that the neighborhood $W:=(d\phi)(\widetilde{W})$ of $u:=(d\phi)(\tilde{u})$ is contained in $TU\cap\mathcal{D}$. 

We assert that the timelike geodesic loop $\sigma:=\phi\circ\tilde{\sigma}:[0,1]\rightarrow M$ with $\dot{\sigma}(0)=u\in\mathcal{D}$ has maximal length among all timelike geodesic loops with initial conditions in $W$. Indeed, let $\beta:[0,1]\rightarrow M$ be a timelike geodesic loop such that $\beta(0)=\beta(1)=x\in U$ and $w=\dot{\beta}(0)\in W$. Consider $\tilde{w}\in\widetilde{W}\subset \widetilde{\mathcal{D}}$ such that $d\phi(\tilde{w})=w$, and $\tilde{x}=\widetilde{\pi}(\tilde{w})\in \widetilde{U}_1$. Now consider the timelike geodesic segment $\tilde{\beta}:[0,1]\rightarrow \widetilde{M}$ such that $\tilde{\beta}(0)=\tilde{x}$ and $\dot{\tilde{\beta}}(0)=\tilde{w}$, i.e., $\tilde{\beta}(t)=\widetilde{\exp}_{\tilde{x}}\,(t\cdot \tilde{w})$ for all $t\in [0,1]$ (observe that $\tilde{\beta}(1)=\widetilde{\exp}_{\tilde{x}}\tilde{w} \in \widetilde{U}_2$ by the definition of $\widetilde{W}$). Note that $\beta=\phi\circ\tilde{\beta}$, since both geodesic segments $\beta$ and $\phi\circ \tilde{\beta}$ have the same initial conditions. Note besides that $\tilde{\beta}(1)=\rho(\tilde{x})$, because both points are in $\widetilde{V}_2$ and $\phi(\tilde{\beta}(1))=\phi(\rho(\tilde{x}))=x$. Since $\rho$ is a Clifford translation, then
\begin{eqnarray*}
\comp(\beta)&=&\comp(\tilde{\beta})\\
&\leq & d(\tilde{x},\rho(\tilde{x}))\\
& = & d(\tilde{p}_1,\rho(\tilde{p}_1))\\
& = & \comp(\tilde{\sigma}) = \comp(\sigma).
\end{eqnarray*}
\end{proof}

%\begin{rem}
%Previous proposition guarantees that for any point $p$ of $M$ there is a locally maximizing timelike geodesic loop with endpoints at $p$.
%\end{rem}

%From the previous notes??? we have the following result:
%\begin{thm}\label{teo_geod_tipo-tempo_fec}
%Any locally maximizing, non-self-conjugate timelike geodesic loop on a Lorentzian manifold is a closed timelike geodesic.
%\end{thm}

Theorem \ref{prop_loc_max_loop} together with Corollary \ref{cor1hillclimb} immediately yield a generalization of \cite[Theorem 3.5]{guediri03}:
\begin{cor}\label{teo_generaliz_resultado_guediri}
%Let $(M,g)$ be a Lorentzian manifold without self-conjugate points along timelike geodesics
%conjugate points along causal geodesics 
 Let $(M,g)$ be a Lorenztian manifold such that $Conj_c(p) =\emptyset$ for some $p \in M$. Assume $(M,g)$ admits a globally hyperbolic Lorentzian covering $\rho:(\widetilde{M},\widetilde{g})\rightarrow (M,g)$ such that the deck transformation group $\mathcal{D}(\rho)$ has a non-trivial future timelike Clifford translation, then $(M,g)$ has a closed timelike geodesic through $p$. In particular, if $(M,g)$ has non-negative sectional curvatures on timelike planes (cf. Remark \ref{remflaherty}), then  $(M,g)$ has a closed timelike geodesic passing through each one of its points.
\end{cor}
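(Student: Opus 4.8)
The plan is to derive this statement directly from Theorem~\ref{prop_loc_max_loop} and Corollary~\ref{cor1hillclimb}, with the hypothesis $Conj_c(p)=\emptyset$ serving only as the bridge between them. First I would observe that the covering $\rho:(\widetilde{M},\widetilde{g})\rightarrow(M,g)$ is globally hyperbolic and that its deck transformation group $\mathcal{D}(\rho)$ contains a non-trivial future timelike Clifford translation; these are precisely the hypotheses of Theorem~\ref{prop_loc_max_loop}. That theorem therefore produces a locally maximizing timelike geodesic loop $\gamma:[0,1]\rightarrow M$ based at the prescribed point $p$, i.e.\ with $\gamma(0)=\gamma(1)=p$.

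The second step is the one requiring a small argument: I must check that $\gamma$ is non-self-conjugate, so that Corollary~\ref{cor1hillclimb} applies. By definition (cf.\ Remark~\ref{callitup}(2)), $\gamma$ is self-conjugate exactly when $\gamma(1)$ is conjugate to $\gamma(0)$ along $\gamma$. Since $\gamma$ is a timelike---hence causal---geodesic issuing from $p$, any such conjugate point would be a member of $Conj_c(p)$, and would in fact force $p=\gamma(1)\in Conj_c(p)$. The assumption $Conj_c(p)=\emptyset$ thus rules out self-conjugacy. Being locally maximizing, $\gamma$ is in particular locally extremal, so Corollary~\ref{cor1hillclimb} guarantees that $\gamma$ is a closed timelike geodesic; as $\gamma(0)=p$, it passes through $p$, proving the first assertion.

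For the final (``in particular'') assertion, I would invoke Remark~\ref{remflaherty}: non-negative sectional curvatures on all timelike planes yield $Conj_c(q)=\emptyset$ for every $q\in M$. Consequently the hypothesis of the first part holds at every point, and running the preceding argument with $p$ replaced by an arbitrary $q\in M$ produces a closed timelike geodesic through each point of $M$.

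I do not expect a genuine obstacle here, since the analytic substance is already packaged in the two cited results; the only step deserving care is the identification in the second paragraph of a self-conjugate loop at $p$ with the presence of a point of $Conj_c(p)$, which is immediate once one unwinds the definitions but should be stated explicitly to make the role of the conjugacy/curvature hypothesis transparent.
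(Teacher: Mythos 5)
Your proposal is correct and follows exactly the route the paper intends: the paper's own "proof" is just the remark that Theorem \ref{prop_loc_max_loop} and Corollary \ref{cor1hillclimb} immediately combine, and your write-up fills in precisely the expected details (the locally maximizing loop at $p$, the observation that $Conj_c(p)=\emptyset$ excludes self-conjugacy since a self-conjugate loop at $p$ would put $p$ itself in $Conj_c(p)$, and Remark \ref{remflaherty} for the curvature case). Nothing is missing.
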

%\begin{proof}
%Apply first Proposition \ref{prop_loc_max_loop}, and then Theorem %\ref{teo_geod_tipo-tempo_fec}.
%\end{proof}
\qcd
%\begin{rem}
%In previous result we can actually guarantee that at least one closed timelike geodesic passes through each point of $M$.
%\end{rem}

Finally, the following corollary is again immediate from Theorem \ref{Teo_Beem_Isom_Clifford_Translacao} and Corollary \ref{teo_generaliz_resultado_guediri} recalling that any flat Lorentz space form is universally covered by Minkowski spacetime. It generalizes \cite[Corollary 3.6]{guediri03}.
\begin{cor}\label{Geod_Flat_Space_Form}
Let $(M,g)$ be a flat Lorentz space form. If the fundamental group $\pi_1(M)$ contains a non-trivial future timelike isometry, then $(M,g)$ contains a closed timelike geodesic through each one of its points.
\end{cor}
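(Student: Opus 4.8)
The plan is to deduce the statement by chaining Theorem \ref{Teo_Beem_Isom_Clifford_Translacao} and Corollary \ref{teo_generaliz_resultado_guediri}, once the structure of a flat Lorentz space form is made explicit. First I would invoke the fact (noted just before the statement) that any flat Lorentz space form $(M,g)$ is universally covered by Minkowski spacetime; write $\phi:\mathbb{R}^n_1\rightarrow (M,g)$ for this Lorentzian covering. Fixing a basepoint identifies the deck transformation group $\mathcal{D}(\phi)$ with $\pi_1(M)$, which then acts on $\mathbb{R}^n_1$ by (affine) isometries. The hypothesis supplies a non-trivial element $\rho\in\mathcal{D}(\phi)$ that is a future timelike isometry of $\mathbb{R}^n_1$.

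The crux is to upgrade this future timelike isometry to a future timelike Clifford translation, which is precisely what Theorem \ref{Teo_Beem_Isom_Clifford_Translacao} provides once its hypotheses are checked for $\mathbb{R}^n_1$. Here I would verify, in turn, that Minkowski spacetime is globally hyperbolic; that it is geodesically complete in every sense; that, being flat, its timelike sectional curvatures vanish and are in particular $\geq 0$; and that it is future $1$-connected. Only the last point requires a short argument, and it is the one I expect to be the main (though still routine) obstacle: given two future-directed timelike curves sharing their endpoints, the straight-line homotopy $H(s,t)=(1-s)\alpha_0(t)+s\alpha_1(t)$ fixes those endpoints and, since each $\partial_t H$ is a convex combination of two future-directed timelike vectors lying in the convex future timecone, every longitudinal curve is again a future-directed timelike curve with the same endpoints; hence $\alpha_0$ and $\alpha_1$ are $t$-homotopic. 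With all hypotheses in force, Theorem \ref{Teo_Beem_Isom_Clifford_Translacao} shows that $\rho$ is a Clifford translation, so $\rho$ is a non-trivial future timelike Clifford translation belonging to $\mathcal{D}(\phi)$.

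It then remains to feed this into Corollary \ref{teo_generaliz_resultado_guediri}. Because $(M,g)$ is flat, it has vanishing (hence non-negative) sectional curvatures on timelike planes, so by Remark \ref{remflaherty} one has $Conj_c(q)=\emptyset$ for every $q\in M$; moreover $\phi:\mathbb{R}^n_1\rightarrow (M,g)$ is a globally hyperbolic Lorentzian covering whose deck group contains the non-trivial future timelike Clifford translation $\rho$. The ``in particular'' clause of Corollary \ref{teo_generaliz_resultado_guediri} thus applies verbatim and produces a closed timelike geodesic through each point of $(M,g)$, which is the desired conclusion. Conceptually, the only substantive thing to get right is that the abstract ``future timelike isometry in $\pi_1(M)$'' of the hypothesis is exactly a deck transformation of the Minkowski covering meeting the hypothesis of Theorem \ref{Teo_Beem_Isom_Clifford_Translacao}; everything else is bookkeeping around the two cited results.
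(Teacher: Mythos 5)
Your proposal is correct and follows essentially the same route as the paper, which likewise deduces the corollary by combining Theorem \ref{Teo_Beem_Isom_Clifford_Translacao} with Corollary \ref{teo_generaliz_resultado_guediri} after noting that the flat Lorentz space form is universally covered by Minkowski spacetime. Your explicit verification of the hypotheses of Theorem \ref{Teo_Beem_Isom_Clifford_Translacao} for Minkowski space (in particular the straight-line $t$-homotopy establishing future $1$-connectedness) is exactly the routine checking the paper leaves implicit.
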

%\begin{proof}
%It directly follows from Theorems \ref{Teo_Beem_Isom_Clifford_Translacao}, \ref{teo_generaliz_resultado_guediri}, and the fact that the universal covering of $(M,g)$ is Minkowski spacetime (any covering map is a local isometry).
%\end{proof}
\qcd
%\begin{exe} 
%Consider the non-compact flat Lorentz space form $M=\mathbb{S}_1^1\times\mathbb{R}^1$ (endowed with the induced metric of $\mathbb{R}^3_2$). The translation $T_{2\pi\cdot\mathbf{e}_1}(u,v)=(u,v)+2\pi\cdot\mathbf{e}_1$, for $(u,v)\in \mathbb{R}^2$, where $\left\langle \mathbf{e}_1,\mathbf{e}_1 \right\rangle=-1$, is a non-trivial future timelike isometry in $\mathcal{D}(\rho)=\pi_1(M)$, being $\rho:\mathbb{R}^2_1\rightarrow M$ given by $\rho(u,v)=(\cos u,\sin u,v)$ for all $(u,v)\in\mathbb{R}^2$ (the universal Lorentzian covering map). Note also that a (unique) timelike closed geodesic passes through each point of $M$. So, $M$ illustrates  previous corollary.
%\end{exe}

Guediri showed that a compact flat Lorentz space form contains a closed timelike geodesic if and only if the fundamental group of the space form has a non-trivial future timelike isometry \cite[Theo. 4.1]{guediri03}. In view of Corollary \ref{Geod_Flat_Space_Form}, it is natural to wonder if the converse of the latter is also true. Therefore a natural question for future investigation is, assuming that $(M,g)$ is a flat Lorentz space form, whether it is true that the fundamental group $\pi_1(M)$ contains a non-trivial future timelike isometry if and only if $(M,g)$ contains a closed timelike geodesic.

\section*{Acknowledgments}
The authors are partially supported by the grant number PID2020-118452GBI00
(Spanish MICINN). JLF is also partially supported by A-FQM-494-UGR18 (FEDER, Andaluc\'{i}a), and KPRH by a scholarship funded
by Brazilian agency FAPESC (chamada p\'ublica no. 03/2017).
\newpage

\begin{comment}

\end{comment}

\end{document}